\newcommand{\la}{\langle}
\newcommand{\ra}{\rangle}
\newcommand{\mc}{\mathcal}
\newcommand{\union}{\cup}
\newcommand{\inter}{\cap}
\newcommand{\Union}{\bigcup}
\newcommand{\Inter}{\bigcap}
\renewcommand{\to}{\rightarrow}
\newcommand{\restrict}{\upharpoonright}
\renewcommand{\and}{\,\&\,}
\newcommand{\NN}{{\mathbb{N}}}
\newcommand{\uhr}{\upharpoonright}
\newcommand{\ES}{\emptyset}
\newcommand{\sub}{\subseteq}
\newcommand{\wt}{\widetilde}
\newcommand{\CCC}{\mathcal{C}}
\newcommand{\ML}{Martin-L\"of}
\newcommand{\LLL}{\rm{Low}}
\newcommand{\DDD}{\mathcal{D}}
\newcommand{\DII}{\Delta^0_2}
\renewcommand{\MR}{\mbox{\rm \textsf{MR}}} 
\newcommand{\SR}{\mbox{\rm \textsf{SR}}}
\newcommand{\QQ}{\mathbb{Q}}
\renewcommand{\phi}{\varphi}
\newcommand{\RA}{\Rightarrow}
\newcommand{\LA}{\Leftarrow}
\newcommand{\bc}{\begin{center}}
\newcommand{\ec}{\end{center}}
\newcommand{\lra}{\longrightarrow}
\renewcommand{\NN}{\omega}
\newcommand{\recursive}{computable}
\newcommand{\computable}{computable}
\newcommand{\recursively}{com\-put\-ably} 
\newcommand{\computably}{com\-put\-ably}
\newcommand{\re}{r.e.} 
\newcommand{\ce}{r.e.}
\newcommand{\recursion}{computability}
\newcommand{\computablefrom}{computable from}
\newcommand{\CR}{\mbox{\rm \textsf{CR}}} 
\newtheorem{theorem}{Theorem}[section]
\newtheorem{definability lemma}[theorem]{Definability Lemma}
\newtheorem{proposition}[theorem]{Proposition}
\newtheorem{claim}[theorem]{Claim}
\newtheorem{definition}[theorem]{Definition}
\newtheorem{lemma}[theorem]{Lemma}
\newtheorem{corollary}[theorem]{Corollary}
\newtheorem{question}[theorem]{Question}
\title{Lowness for the Class of Schnorr Random Reals\thanks{Received by the editors October 15, 2004; accepted for 
publication (in revised form) July 19, 2005; published electronically January 6, 2006. The first author was supported by a Marie 
Curie Fellowship of the European Community Programme ``Improving Human Potential'' under contract HPMF-CT-2002-01888. 
The second author was partially supported by University of Auckland New Staff research grant 3603229/9343, and by the 
Marsden fund of New Zealand, 03-UOA-130. \URL sicomp/35-3/44632.html}}
\author{Bj\O rn Kjos-Hanssen}
\thanks{Department of Mathematics, University of Connecticut, Storrs, CT 06269 
(bjorn@math.uconn. edu).}
\author{Andr\'e Nies}
\thanks{Department of Computer Science, University of Auckland, Private Bag 92019, Auckland, New Zealand 
(andre@cs.auckland.ac.nz).} 
\author{Frank Stephan}
\thanks{School of Computing and Department of Mathematics, National 
University of Singapore, 3 Science Drive 2, Singapore 117543, Republic of Singapore (fstephan@comp.nus. edu.sg).}
\begin{document}
\begin{abstract}
We answer a question of Ambos-Spies and Ku\v{c}era in the
affirmative. They asked whether, when a real is low for Schnorr
randomness, it is already low for Schnorr tests.

Keywords: lowness, randomness, Schnorr randomness, Turing degrees, recursion
theory, computability theory. AMS subject classification 68Q30, 03D25, 03D28.
\end{abstract}

\maketitle

\pagestyle{myheadings} 
\thispagestyle{plain} 
\markboth{B. KJOS-HANSSEN, A. NIES, AND F. STEPHAN}{SCHNORR RANDOM REALS}

\tableofcontents

\section{Introduction}
In an influential 1966 paper \cite{Martin-Lof:66}, \ML\ proposed an
algorithmic formalization of the intuitive notion of randomness for
infinite sequences of 0's and 1's. His formalization was based on an
effectivization of a test concept from statistics, by means of
uniformly recursively enumerable (\ce)~sequences of open sets. \ML's
proposal addressed some insufficiencies in an earlier algorithmic
concept of randomness proposed by Church \cite{Church:40}, who had
formalized a notion now called \computable\  stochasticity. However,
Schnorr \cite {Schnorr:71} criticized \ML's  notion as too strong,
because it was based on an \ce\  test concept rather than a
\computable\ notion of tests. He suggested that one should base a
formalization of randomness on \computable\ betting strategies (also
called martingales), in a way that would still overcome the problem
that Church's concept was too weak. In present terminology, a real
$Z$ is \computably\ random if no computable betting strategy
succeeds  along $Z$; that is, for each computable betting strategy
there is a finite upper bound on the capital that it reaches. The
real $Z$ is Schnorr random if no martingale succeeds
\emph{effectively}. Here effective
 success means that  the capital at $Z\uhr n$ exceeds $f(n)$ infinitely often, 
for some unbounded \recursive\  function $f$. See \cite{Ambos-Spies.Kucera:00} 
for more on the history of these ideas.

We recall some definitions. The Cantor space $2^\omega$ is the set
of infinite binary sequences; these are called \emph{reals} and are
identified with a set of integers, i.e., subsets of $\omega$. If
$\sigma\in 2^{<\omega}$, that is, $\sigma$ is a finite binary
sequence, then we denote by $[\sigma]$ the set of reals that extend
$\sigma$. These form a basis of clopen sets for the usual discrete
topology on $2^\omega$. Write $|\sigma|$ for the length of
$\sigma\in 2^{<\omega}$. The Lebesgue measure $\mu$ on $2^\omega$ is
defined by stipulating that $\mu[\sigma]=2^{-|\sigma|}$. With every
set $U\subseteq 2^{<\omega}$ we associate the open set
$[U]^\preceq=\bigcup_{\sigma\in U}[\sigma]$. The empty sequence is
denoted $\lambda$. If $\sigma, \tau\in 2^{<\omega}$ and $\sigma$ is
a prefix of $\tau$, then we write $\sigma\preceq\tau$. If $\sigma\in
2^{<\omega}$ and $i\in\{0,1\}$, then $\sigma i$ denotes the string of
length $|\sigma|+1$ extending
$\sigma$ whose final entry is $i$. The concatenation of two 
strings $\sigma$ and $\tau$ is denoted $\sigma\tau$. The empty set 
is denoted $\emptyset$, and inclusion of sets is denoted by $\subseteq$. 
If $A$ is a real and $n\in\omega$, then $A\uhr n$ is the prefix of $A$ 
consisting of the first $n$ bits of $A$. Letting $A(n)$ denote bit $n$ of $A$, 
we have $A\uhr n=A(0)A(1)\cdots A(n-1)$.   

Given $\alpha\in 2^{<\omega}$ and a measurable set $C\subseteq
2^\omega$, we let  $\mu_\alpha C=\frac{\mu (C\inter [\alpha])}{\mu
[\alpha]}$. For an open set $W$ we let
$$W|\sigma=\bigcup\left\{[\tau]: \tau\in 2^{<\omega}, [\sigma\tau]\subseteq W\right\}.$$
Note in particular that $\mu_\sigma W=\mu (W|\sigma)$ and
$\mu_\lambda W=\mu W$.

Fixing some effective correspondence between the set of finite
subsets of $\omega$ and $\omega$, we let $D_e$ be the $e$th finite
subset of $\omega$ under this correspondence. In other words, $e$ is
a strong, or canonical, index for the finite set $D_e$. Similarly,
we let $S_e$ be the $e$th finite subset of $2^{<\omega}$ under a
suitable correspondence. Thus $S_e$ is a finite set of strings, and
$[S_e]^\preceq=\union_{\sigma\in S_e}[\sigma]$ is then the clopen
set coded by $e\in\omega$. We use the Cantor pairing function, namely the
bijection $p:\omega^2\to\omega$ given by
$p(n,s)=\frac{(n+s)^2+3n+s}{2}$, and write $\la n,s\ra=p(n,s)$.

A {\it Martin-L\"of test}  is a set $U\subseteq\omega\times
2^\omega$ such that $\mu U_n\le 2^{-n}$, where $U_n$ denotes the
$n$th section of $U$, and $U_n$ is a $\Sigma^0_1$ class, uniformly
in $n$. If, in addition, $\mu U_n$ is a \computable\ real, uniformly
in $n$, then $U$ is called a {\it Schnorr test}. $Z$ is {\it
Martin-L\"of random} if for each Martin-L\"of test $U$ there is an
$n$ such that $Z\not\in U_n$, and {\it Schnorr  random} if for each
Schnorr test $U$ there is an $n$ such that $Z\not\in U_n$. The
notion of Schnorr randomness is unchanged if we instead define a
Schnorr test to be a Martin-L\"of test for which $\mu U_n=2^{-n}$
for each $n\in\omega$.

Concepts encountered in \recursion\ theory are usually based on some
notion  of computation, and therefore have relativized forms. For
instance, we may relativize the tests and randomness notions above
to an oracle $A$. If $\mathcal{C}=\{X: X$ is Martin-L\"of random$\}$,
then the relativization is $\mathcal{C}^A=\{X: X$ is Martin-L\"of
random relative to $A \}$ (meaning that $\Sigma^0_1$ classes are
replaced by $\Sigma^{0,A}_1$ classes). In general, if $\mathcal{C}$
is such a relativizable class, we say that $A$ is {\it low for
$\mathcal{C}$} if $\CCC^A =\CCC$. If $\mathcal{C}$ is a randomness
notion, more computational power means a smaller class, namely
$\CCC^A \subseteq \CCC$ for any $A$. Being low for $\mathcal{C}$
means having small computational power (in a sense that depends on
$\mathcal{C}$). In particular, the low-for-$\mathcal{C}$ reals are
closed downward under Turing reducibility.

The randomness notions for which lowness was first considered are
Martin-L\"of and Schnorr randomness. Ku\v{c}era  and Terwijn
\cite{Kucera.Terwijn:99} constructed a noncomputable\ \ce\  set of
integers $A$ which is low for Martin-L\"of randomness,
answering a question of Zambella \cite{Zambella:90}. In the paper 
\cite{Terwijn.Zambella:01} it is shown that there are continuously 
many reals that are low for Schnorr randomness.

An important difference between the two randomness notions is that
for Martin-L\"of randomness, but not for Schnorr randomness, there
is a universal test $R$. Thus, $Z$ is not Martin-L\"of random if{}f
$Z \in \bigcap_{b \in \NN} R_b$. Therefore, in the Schnorr case, an
apparently stronger lowness notion is being {\it low for Schnorr
tests}, or \emph{$S_0$-low} in the terminology of
\cite{Ambos-Spies.Kucera:00}: $A$ is low for Schnorr tests if for
each Schnorr test $U^A$ relative to $A$ there is an unrelativized
Schnorr test $V$ such that $\bigcap_n U^A_n\subseteq \bigcap_n V_n$.
This implies that $A$ is low for Schnorr randomness, or
\emph{$S$-low} in the terminology of \cite{Ambos-Spies.Kucera:00}.
Ambos-Spies and Ku\v{c}era asked if the two notions coincide. We answer 
this question in the affirmative.

Terwijn and Zambella \cite{Terwijn.Zambella:01} actually constructed 
oracles $A$ that are low for Schnorr tests.
They first gave a characterization of this lowness property via a
notion of traceability, a restriction on the possible sequence of
values of the functions \computablefrom\ $A$. They showed that $A$
is low for Schnorr tests if{}f $A$ is \computably\  traceable (see
formal definition in the next section). Then they constructed
continuously many \computably\  traceable reals. We answer the question
of Ambos-Spies and Ku\v{c}era by showing that each  real which is
low for Schnorr randomness is in fact \computably\  traceable.

Towards this end, it turns out to be helpful to have a more general
view of lowness. We consider lowness for any pair of randomness
notions $\mathcal{C}$, $\mathcal{D}$ with $\mathcal{C} \sub
\mathcal{D}$.

\begin{definition}
$A$ is in $\LLL(\mathcal{C},\mathcal{D})$ if $\mathcal{C} \subseteq
\mathcal{D}^A$. We write $\LLL(\mathcal{C})$ for
$\LLL(\mathcal{C},\mathcal{C})$.
\end{definition}

Clearly, if $\mathcal{C} \sub  \wt \CCC \sub \wt \DDD \sub
\mathcal{D}$ are randomness notions, and the inclusions relativize
(so $\wt\DDD^A\subseteq\mathcal{D}^A$ for each real $A$), then
$\LLL(\wt \CCC,\wt \DDD) \sub \LLL(\mathcal{C},\mathcal{D})$. That
is, we make the class $\LLL(\wt \CCC,\wt \DDD)$  larger by
decreasing $\CCC$ or increasing $\DDD$. Let $\MR, \CR$, and $\SR$
denote the classes of Martin-L\"of random, \computably\  random
(defined below), and Schnorr random reals, respectively. Thus, for
instance, $\LLL(\MR, \CR)$  is the class of oracles $A$ such that
each Martin-L\"of random real is \computably\  random in $A$. We
will characterize lowness for any pair of randomness notions
$\mathcal{C} \sub \mathcal{D}$ with $\mc C,\mc D\in
\{\MR,\CR,\SR\}$.

Recall that $\Omega$ denotes the halting probability of a universal
prefix machine. $\Omega$  is a Martin-L\"of random
\emph{r.e.~real}, i.e., a real that can be
effectively approximated from below. Given $\mc D\supseteq\MR$, an
interesting lowness notion obtained by weakening $\LLL(\MR, \DDD)$
is $\LLL(\{\Omega\},\DDD)$. That is, instead of $\MR \subseteq
\DDD^A$ one merely requires that $\Omega \in \DDD^A$. We denote this
class by $\LLL(\Omega, \DDD)$. In \cite{Nies.Stephan.ea:XX}, the
case $\DDD =\MR$ is studied. The authors show that the class coincides with
$\LLL(\MR)$ on the $\DII$ reals but not in general. In fact, a
Martin-L\"of random real is 2-random if{}f it is in $\LLL(\Omega,
\MR)$.

Here we investigate the class $\LLL(\Omega, \SR)$. We show that $A$
is Low(\MR, \SR) if{}f $A$ is \ce\  traceable. Moreover, the weaker
assumption $\Omega \in \SR^A$ still implies  that $A$ is array
\recursive\  (there is a function $f \le_{wtt} \ES'$ bounding all
functions \computablefrom\ $A$, on almost all inputs). Thus for \ce\
sets of integers $A$, $A$ being Low(\MR, \SR) is in fact equivalent
to $\Omega \in$ \SR$^A$ by Ishmukhametov \cite{Ishmukhametov:99}. We
also provide an example of a real $A$ which is array \recursive\
but not $\LLL(\Omega, \SR)$.

\section{Main concepts}\label{forward}

\subsection{Martingales}  For our purposes, a {\it martingale}  is a  function
$M: 2^{< \omega} \mapsto \QQ $ (where $\QQ$ is the set of rational
numbers) such that (i) the domain of $M$ is $2^{< \omega} $, or
$2^{\leq n}=\{\sigma\in 2^{<\omega}:|\sigma|\le n\}$ for some $n$,
(ii) $M(\lambda ) \leq 1$, and (iii) $M$ has the martingale property
$M(x0)+M(x1)= 2M(x)$ whenever the strings $x0,x1$ belong to the
domain of $M$. A martingale $M$ \emph{succeeds} on a sequence $Z\in
2^\omega$ if
 \[\limsup_{n\rightarrow\infty} M(Z\uhr n)=\infty.\]
\noindent A real is {\it \computably\  random} if no \computable\
martingale succeeds on it.

A martingale $M$ {\it effectively succeeds} on a sequence $Z$ if
there is a nondecreasing and unbounded \computable\  function
$h:\omega\lra\omega$ such that
\[\limsup_{n\rightarrow\infty} M(Z\uhr n)-h(n) >0.\]

\noindent Equivalently (since we are considering integer-valued
functions), $\exists^\infty n$ $M(Z\uhr n)>h(n)$.
 We can now state the characterization of Schnorr randomness
in terms of martingales: a real $Z$ is Schnorr random iff
no \computable\  martingale effectively succeeds on $Z$.

\subsection{Traceability}
Let $W_e$ denote the $e$th \ce\  set of integers in some standard
list. A real $A$ is {\it \ce\  traceable\/} if there is a
\computable\  function $p$, called a \emph{bound}, such that for
every $f\le_T A$ there is a \computable\  function $r$ such that
for all $x$ we have $|W_{r(x)}|\le p(x)$ and $f(x)\in W_{r(x)}$.

The following is a stronger notion than \ce\ traceability. $A$ is
{\it \computably\  traceable} if there is a \recursive\  $p$ such
that for every $f\le_T A$ there is a \recursive\  $r$ such that for
all $x$ we have $|D_{r(x)}|\le p(x)$ and $f(x)\in D_{r(x)}$.

It is interesting to notice  that it does not matter what bound $p$
one chooses as a witness for  traceability; see the following.

\begin{proposition}[see Terwijn and Zambella \cite{Terwijn.Zambella:01}] \label{tracebound}
Let $A$ be a real that is \computably\  traceable with bound $p$.
Then for any monotone and unbounded \computable\  function $p'$, $A$
is \computably\  traceable with bound $p'$. The same holds for \ce\
traceability.
\end{proposition}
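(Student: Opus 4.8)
The plan is to prove the statement for computable traceability; the \ce\ case is obtained by replacing canonical indices by indices of \ce\ sets throughout, as indicated at the end. Take ``monotone'' to mean nondecreasing, and assume (harmlessly) that $p'\ge 1$ everywhere, since otherwise no real could be traceable with bound $p'$. The substance is the case in which $p'$ grows more slowly than $p$, so that a bound-$p$ trace must be ``compressed''; the trick is to apply bound-$p$ traceability not to $f$ itself but to a function that bundles a whole block of values of $f$ into one value, and then to read off a bound-$p'$ trace for $f$ from the resulting trace.

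So fix $f\le_T A$. First set $x_n=\mu x\,[\,p'(x)\ge n\,]$; this is total since $p'$ is unbounded, is computable, and satisfies $p'(x)\ge n$ for all $x\ge x_n$ by monotonicity. Next define block endpoints recursively by $c_0=x_{p(0)}$ and $c_{k+1}=1+\max\big(c_k,\,x_{p(k+1)}\big)$ — the $\max$ is present only to absorb the fact that $p$ need not be monotone — so that $c_k\ge x_{p(k)}$ for all $k$; put $B_k=[c_k,c_{k+1})$. These are computable consecutive intervals covering $[c_0,\infty)$, and crucially every $x\in B_k$ has $p'(x)\ge p'(c_k)\ge p(k)$. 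Now let $g(k)$ be a canonical code for the finite tuple $\la f(x):x\in B_k\ra$; since the blocks are computable and $f\le_T A$ we have $g\le_T A$, so computable traceability with bound $p$ yields a computable $r_0$ with $g(k)\in D_{r_0(k)}$ and $\abs{D_{r_0(k)}}\le p(k)$ for every $k$.

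It remains to manufacture the trace $r$ for $f$. For the finitely many $x<c_0$ let $r(x)$ be a canonical index for $\{f(x)\}$, which is legitimate because the trace is allowed to depend on $f$, and which has size $1\le p'(x)$. For $x\ge c_0$, find the unique $k$ with $x\in B_k$ and let $r(x)$ be a canonical index for the set of ``$x$-coordinates'' of those elements of $D_{r_0(k)}$ that happen to be codes of tuples of length $\abs{B_k}$. Then $f(x)\in D_{r(x)}$, since $g(k)\in D_{r_0(k)}$ and the $x$-coordinate of $g(k)$ is $f(x)$, while $\abs{D_{r(x)}}\le\abs{D_{r_0(k)}}\le p(k)\le p'(x)$; and $r$ is computable because $r_0$, the $c_k$, and the decoding are. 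For \ce\ traceability one uses instead the \ce\ set $W_{r_0(k)}\ni g(k)$ of size $\le p(k)$, and lets $W_{r(x)}$ enumerate the $x$-coordinates of the length-$\abs{B_k}$ tuple-codes that appear in $W_{r_0(k)}$; membership and the size bound carry over verbatim. The only points needing care are keeping the block endpoints computable despite the non-monotonicity of $p$ (handled by the recursion above) and the initial segment below $c_0$ (hard-coded into $r$); beyond that I expect no real obstacle.
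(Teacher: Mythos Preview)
The paper does not actually prove this proposition; it is stated with attribution to Terwijn and Zambella and then used as a tool later on. Your argument is correct and is essentially the standard one: partition $\omega$ into computable blocks $B_k$ chosen so that $p'(x)\ge p(k)$ for all $x\in B_k$, trace the function $g(k)=\langle f(x):x\in B_k\rangle$ with bound $p$, and read off a bound-$p'$ trace for $f$ coordinatewise. Your handling of the two minor nuisances---the possible non-monotonicity of $p$ (absorbed by the recursive $\max$ in the definition of $c_{k+1}$) and the finite initial segment below $c_0$ (hard-coded, which is legitimate since traces may depend non-uniformly on $f$)---is exactly right, and the \ce\ variant goes through as you say.
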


The result of Terwijn and Zambella is the  following.

\begin{theorem}[see \cite{Terwijn.Zambella:01}] \label{TZbigtheorem} A real $A$ 
is low for Schnorr tests if{}f $A$ is \computably\  traceable. 
\end{theorem}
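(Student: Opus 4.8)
The plan is to prove both directions of Theorem~\ref{TZbigtheorem} separately, treating the equivalence between computable traceability and lowness for Schnorr tests as a mutual simulation between bounded guessing of values of $A$-computable functions and control over the measures of sections of an $A$-relative Schnorr test.

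For the direction that computable traceability implies lowness for Schnorr tests, suppose $A$ is computably traceable with bound $p$, and let $U^A$ be a Schnorr test relative to $A$, so $\mu U^A_n \le 2^{-n}$ uniformly and $\mu U^A_n$ is an $A$-computable real uniformly in $n$. The key observation is that $U^A_n$ can be captured, up to arbitrarily small error, by a finite clopen set whose code is an $A$-computable function of $n$: using the $A$-computable approximation to $\mu U^A_n$, one can find a stage at which a finite union of basic clopen cylinders inside $U^A_n$ has measure within $2^{-n}$ of $\mu U^A_n$. Let $f(n)$ be a strong index $e$ for such a clopen set $S_e$, a function computable from $A$. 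By Proposition~\ref{tracebound} we may assume the bound $p$ is as small as we like, say $p(n)=n+1$, so we get a computable trace $r$ with $|D_{r(n)}|\le n+1$ and $f(n)\in D_{r(n)}$. Now define $V_n = \bigcup_{e\in D_{r(n)}} [S_e]^\preceq$, adjusting levels so that the measures do not accumulate (replace $U^A_n$ by $U^A_{2n+c}$ first, or take a sum $\sum_{e\in D_{r(n)}}\mu[S_e]^\preceq \le (n+1)2^{-(2n+c)}$, which is small and \emph{computable} since each term is the measure of a clopen set and the index set $D_{r(n)}$ is given computably). This $V$ is an unrelativized Schnorr test (its sections have computable measure uniformly, since we are taking finite unions of clopen sets over a computably given finite index set) with $\bigcap_n U^A_n \subseteq \bigcap_n V_n$ after reindexing, because $f(n)$ always lands in the trace.

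For the converse, suppose $A$ is low for Schnorr tests and let $f\le_T A$; we must computably trace $f$. The idea is to encode the values of $f$ into an $A$-relative Schnorr test and then extract a computable trace from the unrelativized test that covers it. For each $x$, reserve a block of $2^{2x+2}$ many disjoint basic clopen sets of measure $2^{-(2x+2)}$ each, indexed by potential values; but $f(x)$ can be arbitrarily large, so instead partition the reserved measure at level $x$ into intervals indexed by $\NN$ with the $j$th interval having measure $2^{-x}\cdot 2^{-(j+1)}$, and put the $f(x)$th such piece into $U^A_x$. Then $\mu U^A_x \le \sum_x 2^{-x}2^{-(f(x)+1)}$-style bookkeeping gives $\mu U^A_x \le 2^{-x}$, and the measure is $A$-computable because $f$ is. Lowness yields an unrelativized Schnorr test $V$ with $\bigcap_n U^A_n\subseteq\bigcap_n V_n$. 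Since $\mu V_n$ is computable, one can enumerate $V_n$ and wait until its measure is within $2^{-n}$ of its limit; the clopen pieces appearing by then form a finite set, and reading off which coded value-intervals they meet yields, for each $x$, a computable finite set $W_{r(x)}$ (even a $D_{r(x)}$) of size bounded by a computable $p$ containing $f(x)$ — because the single clopen piece coding $f(x)$ has measure too large to be omitted once $V_n$ has nearly converged, so it must be covered by $V_n$, hence its index is found.

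The main obstacle, and the place where care is genuinely required, is the bookkeeping that makes the measures \emph{computable}, not merely bounded: in the forward direction one must ensure that $\mu V_n$ is uniformly computable despite $V_n$ being assembled from an $A$-computable index $r(n)$ applied to clopen codes — this works only because $r$ is computable, which is exactly what traceability delivers, so the finite index set $D_{r(n)}$ is available computably and the finite sum of clopen measures is then plainly computable. In the converse direction the delicate point is quantitative: one must choose the measures of the value-coding pieces large enough (relative to the error $2^{-n}$ tolerated in the convergence of $\mu V_n$) that the piece coding $f(x)$ cannot be among the tiny leftover measure $V_n$ fails to enumerate in finite time — which forces a careful matching of the level at which $f(x)$ is encoded against the level $n$ of the test one interrogates, and this is the technical heart of extracting a trace with a fixed computable bound.
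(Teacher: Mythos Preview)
The paper does not itself prove Theorem~\ref{TZbigtheorem}; it is quoted from \cite{Terwijn.Zambella:01}. However, the paper's proof of the r.e.-traceable analogue (Theorem~\ref{allonlow}(I)) is an adaptation of the Terwijn--Zambella method, so a comparison is still possible.

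Your forward direction has a repairable gap: approximating each $U^A_n$ by a \emph{single} clopen set $[S_{f(n)}]^\preceq$ does not yield $\bigcap_n U^A_n \subseteq \bigcap_n V_n$, since a real $Z\in\bigcap_n U^A_n$ may lie in the residue $U^A_m\setminus [S_{f(m)}]^\preceq$ for some fixed $m$ and hence miss $V_m$. The remedy (used in Terwijn--Zambella and in the paper's r.e.\ version) is to trace the two-variable function $\langle n,s\rangle\mapsto(\text{code of }U^A_{n,s})$ over \emph{all} stages $s$, so that $U^A_n=\bigcup_s U^A_{n,s}\subseteq V_n$ outright; computability of $\mu V_n$ then follows because the trace is given by strong indices and the tail measure is effectively small.

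Your converse direction has a more fundamental problem. Lowness for Schnorr tests guarantees only $\bigcap_n U^A_n \subseteq \bigcap_n V_n$, a statement about a \emph{null} set. No clopen piece of positive measure can lie in $\bigcap_n U^A_n$, so there is no reason the piece coding $f(x)$ is covered by any $V_n$; your assertion that it ``must be covered by $V_n$'' does not follow from the hypothesis. The actual Terwijn--Zambella argument (mirrored in the paper for the r.e.\ case) instead encodes $g(k)$ via the sets $B_{k,g(k)}$ with $U^g_n=\bigcup_{k>n}B_{k,g(k)}$, uses the Lebesgue-density-based Lemma~\ref{weakening} to convert the null-set containment $\bigcap_n U^g_n\subseteq V$ into a \emph{local} statement $\mu_\sigma(U^g_n-V)=0$, and then extracts the trace as $T_k=\{l:\mu(B_{k,l}-\tilde V)<2^{-(l+3)}\}$, with $|T_k|$ bounded by an independence argument (Lemma~\ref{lalala}). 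This passage through Lebesgue density is the key idea missing from your sketch.
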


\section{Statement of the main result}\mbox{}

\begin{theorem}  \label{allonlow} \hfill \mbox{}
\begin{enumerate}
\item[{\rm (I)}]  $A$ is $\LLL(\MR, \SR)$ if{}f $A$ is \ce\ traceable.
\item[{\rm (II)}]  $A$ is $\LLL(\CR, \SR)$  if{}f $A$ is $\LLL(\SR)$
if{}f $A$ is \computably\  traceable.
\end{enumerate}
\end{theorem}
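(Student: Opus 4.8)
## Proof Proposal

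The plan is to prove the two equivalences by exhibiting a circular chain of implications in each case, using the Terwijn--Zambella characterization (Theorem~\ref{TZbigtheorem}) as the anchor for the ``computably traceable'' endpoints. For part (II), since $\MR\subseteq\CR\subseteq\SR$ relativizes, we automatically get $\LLL(\SR)\subseteq\LLL(\CR,\SR)$, and $\LLL(\CR,\SR)\subseteq\LLL(\MR,\SR)$ as well; and low for Schnorr tests trivially implies $\LLL(\SR)=\LLL(\SR,\SR)$. So the substantive work is to close the loop: assuming $A\in\LLL(\CR,\SR)$ (the weakest hypothesis in (II)), derive that $A$ is computably traceable, which by Theorem~\ref{TZbigtheorem} gives low for Schnorr tests, hence $\LLL(\SR)$, completing part (II). Similarly for (I), one direction is that $A$ \ce\ traceable implies $A\in\LLL(\MR,\SR)$, and the other is that $A\in\LLL(\MR,\SR)$ implies $A$ is \ce\ traceable; note $\LLL(\CR,\SR)\subseteq\LLL(\MR,\SR)$ so the harder-sounding traceability conclusion of (II) is consistent with the weaker one here.

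First I would handle the ``traceable $\Rightarrow$ lowness'' directions, which should be the easier half. Given $f\le_T A$ and a trace $(W_{r(x)})$ (or $(D_{r(x)})$) with computable bound $p$, and given a martingale $M$ computable from $A$ that effectively succeeds on some real $Z$ witnessed by an order function $h$, I want to build an unrelativized Schnorr test (or Schnorr random witness) capturing $Z$. The idea is that $M$ and its use-function are computed from $A$, so by traceability we can cover the values of the relevant $A$-computable function by a computable family of small finite sets; then we take a ``union over all candidates in the trace'' of the clopen sets on which each candidate martingale's capital exceeds the threshold $h$. Because $|D_{r(x)}|\le p(x)$ is bounded computably, summing $2^{-h(n)}$-type measure bounds over at most $p(x)$ candidates still yields a computable, summable bound, so the resulting test is a genuine Schnorr (or \ce, as appropriate) test. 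Proposition~\ref{tracebound} is the key lubricant here: it lets me replace whatever bound $p$ comes out of traceability by one slow-growing enough that $\sum_x p(x)2^{-h(\cdot)}$ converges to a computable real.

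The hard direction, and what I expect to be the main obstacle, is the converse: $A\in\LLL(\MR,\SR)$ (resp.\ $\LLL(\CR,\SR)$) $\Rightarrow$ $A$ is \ce\ (resp.\ computably) traceable. Here I am given that every Martin-L\"of random real is Schnorr random relative to $A$, and I must extract a trace for an arbitrary $f\le_T A$. The natural strategy is the contrapositive: suppose $f\le_T A$ has no \ce\ trace with bound $p$; then for cofinitely many (or infinitely many) $x$ the value $f(x)$ escapes every small \ce\ set, and I want to leverage this to build an $A$-computable martingale (or Schnorr test relative to $A$) that effectively succeeds on a specified Martin-L\"of random real --- contradicting $A\in\LLL(\MR,\SR)$. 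Concretely, I would use $\Omega$ (or a generic \ML-random real): encode the ``unpredictability'' of $f$ into betting along $\Omega$, betting heavily on blocks whose content is forced by $f(x)$ in a way no computable trace can anticipate. The delicate points are (a) arranging that the martingale effectively succeeds, i.e.\ beats some computable order $h$ infinitely often, not merely succeeds in the $\limsup=\infty$ sense --- this forces careful bookkeeping of how much capital is committed versus recovered; and (b) in the computably-traceable case, getting a bound on the finite sets $D_{r(x)}$ rather than merely \ce\ sets $W_{r(x)}$, which is where the distinction between parts (I) and (II) really bites and where I would exploit that the hypothesis in (II) is about $\CR$, giving access to genuine martingale (not just measure/test) machinery relative to $A$. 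Managing this capital/threshold tradeoff while keeping everything computable from $A$ is the crux of the argument.
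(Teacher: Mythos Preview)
Your plan for the easy direction (traceable $\Rightarrow$ lowness) is roughly right in spirit and could be made to work, though the paper argues via tests rather than martingales: it traces the function $\langle n,s\rangle\mapsto$ (code for the clopen set $U^A_{n,s}$), prunes the trace to those candidates satisfying the right measure constraints, and assembles the survivors into an unrelativized Martin-L\"of test covering $\bigcap_n U^A_n$.

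The real gap is in your hard direction for (I). The contrapositive strategy --- assume $f$ has no \ce\ trace with bound $p$ and then bet along $\Omega$ --- does not get off the ground: the non-existence of a trace is a highly non-effective hypothesis, and there is no evident link between the values $f(x)$ and the bits of $\Omega$ that would let an $A$-martingale exploit one to predict the other. The paper does \emph{not} argue by contradiction. Given an arbitrary $g\le_T A$, it directly manufactures an $A$-Schnorr test $U^g_n=\bigcup_{k>n}B_{k,g(k)}$, where $B_{k,l}=\bigcup_{|\tau|=l}[\tau 1^k]$ are specific clopen sets of measure $2^{-k}$. The lowness hypothesis gives $\bigcap_n U^g_n\subseteq R_3$ (a fixed level of the universal ML test). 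A Lebesgue-density lemma then finds $\sigma$ and $n$ with $\mu_\sigma(U^g_n - R_3)=0$ and $\mu_\sigma R_3$ still small; after localizing, the trace is $T_k=\{l:\mu(B_{k,l}-\tilde V)<2^{-(l+3)}\}$, which is \ce\ because $\tilde V$ is $\Sigma^0_1$. That $|T_k|$ is computably bounded comes from a combinatorial argument exploiting the mutual independence of the $B_{k,l}$ for $l$'s spaced at least $k$ apart. None of this machinery --- the $B_{k,l}$ encoding, the density localization, or the independence bound --- appears in your sketch, and it is the heart of the proof.

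For (II) you also miss the key decomposition. The paper does not try to upgrade the trace argument directly using the $\CR$ hypothesis. Instead it proves two separate ingredients: first, $\LLL(\CR,\SR)$ implies $A$ is hyperimmune-free (if $g\le_T A$ is not dominated, one builds a computably random real $X$ together with an $A$-martingale that Schnorr-succeeds on $X$); second, hyperimmune-free together with \ce\ traceable implies computably traceable (dominate the function ``least stage at which $f(x)$ enters its trace'' by a computable function, and truncate the enumeration). Combining these with (I) closes the loop. Your proposal gestures at ``exploiting $\CR$'' but does not identify the hyperimmune-free step, which is where the work actually happens.
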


  We make some remarks about the proofs and fill in the details in the next section.
  We obtain Theorem \ref{allonlow}(I) by modifying the methods in \cite{Terwijn.Zambella:01} to the case of
\ce\  traces instead of \computable\  ones.

As for Theorem \ref{allonlow}(II), by Theorem \ref{TZbigtheorem} if
$A$ is computably traceable, then $A$ is low for Schnorr tests. Hence
$A$ is certainly $\LLL(\SR)$, and therefore also $\LLL(\CR,\SR)$. It
remains only to show that each real $A$ $\in$ $\LLL(\CR, \SR)$ is
\computably\ traceable. To see that this is so, take the following
three steps:

1. Recall that $A$ is {\it  hyperimmune-free} if for each $g\le_T
A$ there is a \recursive\  $f$ such that for all $x$ we have
$g(x)\le f(x)$. As a first step towards proving Theorem
\ref{allonlow}(II), Bedregal and Nies \cite{Bedregal.Nies:03} showed
that each $A \in \LLL(\CR, \SR)$ is  hyperimmune-free (see Lemma
\ref{andre} below).  To see this, assume that $A$ is not, so there
is a function $g \le_T A$ not dominated by any \computable\
function $f$. Define a martingale $L\le_T A$ which  succeeds in the
sense of Schnorr, with the \computable\  lower bound $h(n) = n/4$,
on some $Z \in \CR$. One uses here that $g$ is infinitely often
above the running time of each \computable\  martingale. (Special
care has to be taken with the partial martingales, which results in
a real $Z$ that is only $\Delta^0_3$.)

2. If $A$ is  hyperimmune-free   and \ce\ traceable, then $A$ is
\computably\  traceable. If we let $g\le_T A$, then the first stage
where $g(x)$ appears in a given trace for $g$ can be computed
relative to $A$.

3. Now each $A$ in $\LLL(\CR, \SR)$ is \ce\  traceable by Theorem
\ref{allonlow}(I), and hence by the above is \computably\  traceable, and
Theorem \ref{allonlow}(II) follows.

We discuss lowness for the remaining pairs of randomness notions.
 Nies has shown that $A$ is $\LLL(\MR, \CR)$ if{}f $A$ is $\LLL(\MR)$ if{}f $A$ is $K$-trivial, where $A$ is $K$-trivial
if for all $n \ K(X\uhr n) \le K(n)+O(1)$ (see \cite{Nies:AM}). Here
$K(\sigma)$ denotes the prefix-free Kolmogorov complexity of
$\sigma\in 2^{<\omega}$. Finally, he shows that  a real $A$ which is
$\LLL(\CR)$ is computable; namely, $A$ is both $K$-trivial and
hyperimmune-free. Since all $K$-trivial reals are $\Delta^0_2$,
 and all hyperimmune-free $\Delta^0_2$ reals are \computable, the conclusion follows.

\section{Proof of the main result}

We first need to develop a few useful facts from measure theory.

\begin{definition}
A measurable set $A$ has density $d$ at a real $X$ if
\end{definition}
$$\lim_{n\to\infty}\mu_{(X\restrict n)}A=d.$$

A basic result is the following.

\begin{theorem}[Lebesgue density theorem]
Let $\Xi(A)=\{X: A$ has density $1$ at $X \}$. If $A$ is a measurable
set, then so is $\Xi(A)$, and the measure of the symmetric difference
of $A$ and $\Xi(A)$ is zero.
\end{theorem}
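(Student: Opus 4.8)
The plan is to prove the Lebesgue density theorem in the Cantor space setting by reducing it to the effective combinatorial core: showing that for a measurable set $A$, almost every point of $A$ is a density point of $A$, and almost every point of the complement is a density point of the complement. First I would handle the case where $A$ is an open set of the form $[U]^\preceq$ for some $U\subseteq 2^{<\omega}$. Given $X\in A$, there is some $\sigma\preceq X$ with $[\sigma]\subseteq A$, and then $\mu_{(X\restrict n)}A=1$ for all $n\ge|\sigma|$; hence $A\subseteq\Xi(A)$ for open $A$, and in particular $\mu(A\setminus\Xi(A))=0$. The remaining work for open $A$ is the complementary estimate, that almost no point of $2^\omega\setminus A$ is a density point of $A$; this is where the genuine content lies.

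For the complementary estimate I would argue by contradiction using a maximal-type / Vitali-covering argument adapted to the dyadic tree. Suppose the set $B$ of points $X\notin A$ at which $A$ has density $1$ had positive measure; fix $\varepsilon>0$ small. For each such $X$ and each threshold we can find arbitrarily long $\sigma\preceq X$ with $\mu_\sigma A>1-\varepsilon$. Because the basic clopen sets $[\sigma]$ form a tree, the collection of such $\sigma$ that are $\preceq$-minimal with this property gives a \emph{prefix-free} (hence disjoint) cover of $B$. Summing $\mu[\sigma]$ over this cover and comparing $\mu(A\cap[\sigma])\ge(1-\varepsilon)\mu[\sigma]$ against $\mu(B\cap[\sigma])$, while using that $B$ and $A$ are disjoint, forces $\mu(B)\le\varepsilon\cdot(\text{total})$ on each level, and letting $\varepsilon\to0$ gives $\mu(B)=0$. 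The dyadic structure makes the Vitali covering step essentially automatic — minimal strings are automatically incomparable — so this is cleaner than the classical $\Reals^n$ proof.

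Next I would remove the restriction to open sets. For an arbitrary measurable $A$ and any $\delta>0$, choose an open $U\supseteq A$ with $\mu(U\setminus A)<\delta$ (outer regularity of $\mu$ on $2^\omega$, which holds because $\mu$ is a Borel measure on a compact metric space). At points $X$ where both $\mu_{(X\restrict n)}U\to 1$ and the density of $U\setminus A$ is $0$, one gets $\mu_{(X\restrict n)}A\to 1$ as well; a Borel–Cantelli / $\delta\to 0$ argument along a sequence $\delta=2^{-k}$ then shows that $\Xi(A)\supseteq A$ up to a null set. Applying the same reasoning to the complement $2^\omega\setminus A$ gives that $2^\omega\setminus\Xi(A)\supseteq 2^\omega\setminus A$ up to a null set, since a point with $\mu_{(X\restrict n)}A\to1$ cannot simultaneously have $\mu_{(X\restrict n)}(2^\omega\setminus A)\to1$. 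Together these two inclusions say exactly that $A\,\triangle\,\Xi(A)$ is null. Measurability of $\Xi(A)$ follows because $\Xi(A)$ differs from the measurable set $A$ by a null set, or alternatively directly: $\Xi(A)=\bigcap_{k}\bigcup_{N}\bigcap_{n\ge N}\{X:\mu_{(X\restrict n)}A>1-2^{-k}\}$, and each inner set is a countable union of basic clopen sets since $\mu_\sigma A$ depends only on $\sigma$, so $\Xi(A)$ is Borel.

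The main obstacle is the complementary estimate for open sets, i.e.\ controlling the measure of the set of non-elements of $A$ at which $A$ nonetheless has density $1$. Everything else is soft (regularity of $\mu$, a routine limit argument, an explicit Borel description of $\Xi(A)$); but that one step needs the Vitali-style disjointification, and the point to be careful about is that one really does get a \emph{prefix-free} family of witnessing strings covering all of the hypothetical positive-measure bad set, so that the measures add up and the contradiction is quantitative in $\varepsilon$.
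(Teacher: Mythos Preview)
The paper does not prove the Lebesgue density theorem; it is stated without proof as a classical background fact and used only through the corollary that follows it. So there is no proof in the paper to compare yours against.

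Your outline is essentially a correct proof in the Cantor-space setting, and the observation that the dyadic structure makes the Vitali step trivial (minimal witnessing strings are automatically prefix-free, hence define disjoint cylinders) is exactly right. One point deserves tightening: in the passage from open to general measurable sets you invoke ``the density of $U\setminus A$ is $0$'' at almost every relevant point, but $U\setminus A$ is not open, so at that stage of your argument this is not yet available and the step as written is circular. What actually makes the bootstrapping work is that your prefix-free covering argument, read for an arbitrary measurable set $B$ rather than an open one, already yields the dyadic maximal inequality
\[
\mu\bigl\{X:\sup_n\mu_{X\uhr n}B>\varepsilon\bigr\}\le \mu(B)/\varepsilon.
\]
Applying this with $B=U\setminus A$ and $\mu(B)<\delta$ gives the quantitative bound needed for your Borel--Cantelli step along $\delta=2^{-k}$. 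With that made explicit, the argument goes through.
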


\begin{corollary}\label{LebesgueDensity}
Let $C$ be a measurable subset of $2^\omega$, with $\mu C>0$. Then
for each $\delta<1$ there is an $\alpha\in 2^{<\omega}$ such that
$\mu_\alpha C\ge\delta$.
\end{corollary}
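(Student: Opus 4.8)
The plan is to deduce the corollary directly from the Lebesgue density theorem. Given a measurable set $C$ with $\mu C>0$, consider the set $\Xi(C)=\{X:C\text{ has density }1\text{ at }X\}$. By the density theorem, $\mu(C\setminus\Xi(C))=0$, so in particular $\mu\Xi(C)\ge\mu C>0$, and hence $\Xi(C)$ is nonempty. Fix any $X\in\Xi(C)$. Then $\lim_{n\to\infty}\mu_{(X\uhr n)}C=1$, so for any $\delta<1$ there is an $n$ with $\mu_{(X\uhr n)}C\ge\delta$; take $\alpha=X\uhr n$.

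The only point requiring a word of care is that $\mu\Xi(C)>0$: this follows because $\mu\Xi(C)\ge\mu C-\mu(C\setminus\Xi(C))=\mu C-0>0$ (using that the symmetric difference, and hence $C\setminus\Xi(C)$, is null). One might alternatively just note that a null set cannot be all of a positive-measure-complemented space, but the symmetric-difference bound is cleaner. I do not anticipate a genuine obstacle here; the statement is an immediate corollary, and the ``hard part'' — the density theorem itself — is being quoted.

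One stylistic choice: rather than invoking $\Xi(C)$, I could phrase the argument as ``almost every $X$ is a density point of $C$, and since $\mu C>0$ there is at least one such $X$ lying in a set of positive measure, in particular at least one exists,'' but spelling out $\Xi(C)$ keeps the logic transparent and matches the notation just introduced. I would write the proof in three sentences along the lines above.

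\begin{proof}
Let $\Xi(C)=\{X:C\text{ has density }1\text{ at }X\}$. By the Lebesgue density theorem the symmetric difference of $C$ and $\Xi(C)$ is null, so $\mu\Xi(C)\ge\mu C-\mu(C\setminus\Xi(C))=\mu C>0$; in particular $\Xi(C)\ne\emptyset$. Fix $X\in\Xi(C)$. Since $\lim_{n\to\infty}\mu_{(X\uhr n)}C=1$ and $\delta<1$, there is an $n$ with $\mu_{(X\uhr n)}C\ge\delta$, and $\alpha=X\uhr n$ is as required.
\end{proof}
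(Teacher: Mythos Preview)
Your proof is correct and is exactly the intended derivation: the paper does not spell out a proof of this corollary but simply records it as an immediate consequence of the Lebesgue density theorem, and your argument is the natural way to make that implication explicit. There is nothing to add or change.
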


We will use the following consequence of Corollary
\ref{LebesgueDensity}.

\begin{lemma}\label{weakening}
Let $0<\epsilon\le 1$. If $U_n$, $n\in\omega$, and $V$ are open
subsets of $2^\omega$ with $\Inter_{n\in\omega} U_n\subseteq V$ and
$\mu V<\epsilon$, then there exist $\sigma$ and $n$ such that
$\mu_\sigma(U_n-V)=0$ and $\mu_\sigma V<\epsilon$.
\end{lemma}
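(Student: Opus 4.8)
The plan is to recast the statement in terms of the closed set $C=2^\omega\setminus V$ and then to extract the witnessing string by a Baire category argument, after a measure-theoretic preprocessing step that keeps $\mu_\sigma V$ under control. Since $U_n\setminus V=U_n\cap C$, what must be produced is a string $\tau$ with $\mu_\tau V<\epsilon$ — equivalently $\mu_\tau C>1-\epsilon$ — inside whose cylinder the set $U_N\cap C$ is null, for some $N$. In this language the hypotheses become $\mu C=1-\mu V>1-\epsilon\ge 0$ (so in particular $\mu C>0$, using $\epsilon\le 1$) and $\bigcap_n U_n\subseteq V$, i.e.\ $C\subseteq\bigcup_n(2^\omega\setminus U_n)$.

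The first step is to delete the $C$-null cylinders: put $C'=C\setminus\bigcup\{[\sigma]:\mu(C\cap[\sigma])=0\}$. Then $C'$ is closed (an open set has been removed), $\mu C'=\mu C$ (what was removed is a countable union of $C$-null sets, hence $C$-null), and — this is the whole point of the construction — every cylinder $[\sigma]$ that meets $C'$ satisfies $\mu(C\cap[\sigma])>0$. Now $C'$ is a nonempty compact metric space, and it is the countable union of the relatively closed sets $C'\cap(2^\omega\setminus U_n)$; so by the Baire category theorem one of them, say $C'\cap(2^\omega\setminus U_N)$, has nonempty interior in $C'$. Thus there is a string $\sigma$ with $\emptyset\ne[\sigma]\cap C'\subseteq 2^\omega\setminus U_N$, that is, $[\sigma]\cap C'\cap U_N=\emptyset$. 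Since $C$ and $C'$ differ by a null set, $\mu([\sigma]\cap C\cap U_N)=0$, i.e.\ $\mu_\sigma(U_N\setminus V)=0$.

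It remains to enlarge the relative measure of $C$. Because $[\sigma]$ meets $C'$ we have $\mu(C\cap[\sigma])>0$, so by the Lebesgue density theorem (or by Corollary~\ref{LebesgueDensity} applied inside $[\sigma]$) there is a point $X\in[\sigma]\cap C$ at which $C$ has density $1$. Choosing $k\ge|\sigma|$ large enough that $\mu_{X\uhr k}C>1-\epsilon$ and setting $\tau=X\uhr k$, we get $\mu_\tau V<\epsilon$, while $(U_N\setminus V)\cap[\tau]\subseteq(U_N\setminus V)\cap[\sigma]$ is null, so $\mu_\tau(U_N\setminus V)=0$ as well. Then $\tau$ and $N$ witness the conclusion.

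The step I expect to be the real obstacle is reconciling two competing demands, since the conclusion asks for $\mu_\sigma(U_n\setminus V)$ to be \emph{exactly} $0$, not merely small. A bare Lebesgue density argument around a density point of $C$ only drives $\mu_{X\uhr k}(U_N\setminus V)$ to $0$ in the limit and never attains it; conversely, a raw Baire category argument inside $C$ yields the exact equality but over a cylinder whose $V$-mass could be as large as $1$ (picture $C$ with an isolated point, or some other null piece, carrying the relevant part of $2^\omega\setminus U_N$). Deleting the $C$-null cylinders beforehand is exactly what forces the Baire argument to hand back a cylinder of positive $C$-measure, and only then does the density theorem let us inflate it into one with $\mu_\tau V<\epsilon$. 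Once this interaction is arranged, the remaining verifications are routine.
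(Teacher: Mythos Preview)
Your proof is correct, but the route differs from the paper's. The paper argues by contradiction: assuming no such $(\sigma,n)$ exists, it builds by hand a real $X\in\bigcap_n U_n\setminus V$, at stage $n$ using the failure of the conclusion at $\sigma_n$ to find $\tau\succeq\sigma_n$ with $[\tau]\subseteq U_n$ and $\mu_\tau V<1$, and then invoking Corollary~\ref{LebesgueDensity} to extend to $\sigma_{n+1}$ with $\mu_{\sigma_{n+1}}V<\epsilon$. You instead give a direct, modular argument: pass to the measure-theoretic support $C'$ of $C=2^\omega\setminus V$, apply the Baire category theorem in $C'$ to get a cylinder where some $U_N\cap C$ is null, and then use Lebesgue density to shrink that cylinder until $\mu_\tau V<\epsilon$. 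The preprocessing step of deleting $C$-null cylinders is exactly what the paper's inductive construction achieves implicitly, and it is the genuine idea here; as you rightly observe, Baire alone could land on a cylinder with $\mu_\sigma V=1$, and density alone never reaches $\mu_\sigma(U_n\setminus V)=0$ exactly. Your version separates the two ingredients cleanly and makes the role of each transparent; the paper's version is shorter and self-contained (no appeal to Baire as a black box), effectively reproving the needed instance of Baire while tracking measure along the way.
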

\unskip

\begin{proof} Suppose otherwise; we shall obtain a
contradiction by constructing a real in $\Inter_{n\in\omega} U_n-V$.
Let $\sigma_0=\lambda$ and assume we have defined $\sigma_n$ such
that $\mu_{\sigma_n} V<\epsilon$. By hypothesis,
$\mu_{\sigma_n}(U_n-V)>0$, and thus there is a $[\tau]\subseteq U_n$ such
that $\mu_{\sigma_n}([\tau]-V)>0$. In particular,
$\tau\succeq\sigma_n$ and $\mu_\tau V<1$. Let $C=2^\omega-V$, a
closed and hence measurable set. By Corollary \ref{LebesgueDensity}
applied to $C$ (and with $2^\omega$ replaced by $[\tau]$), there
exists $\sigma_{n+1}\succeq\tau$ such that
$\mu_{\sigma_{n+1}}V<\epsilon$. Let $X$ be the real that extends all
$\sigma_n$'s constructed in this way. Since $[\sigma_{n+1}]\subseteq
U_n$ for all $n$, we have that $X\in\Inter_{n\in\omega} U_n$. However, 
$[\sigma_n]\not\subseteq V$ for every $n$, so, since $V$ is open,
$X\not\in V$. This contradiction completes the proof.\qquad
\end{proof}

We now get to the proof of Theorem \ref{allonlow}.
First we show Theorem \ref{allonlow}(I), namely, that $A$ is Low(\MR, \SR) if{}f $A$ is \ce\ traceable. 
We start with the ``$\LA$'' direction.

\begin{lemma} If $A$ is \ce\ traceable, then $A$ is $\LLL(\MR,\SR)$.
\end{lemma}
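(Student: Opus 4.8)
The plan is to show that if $A$ is \ce\ traceable, then every Martin-L\"of random real is Schnorr random relative to $A$; equivalently, given a Schnorr test $U^A$ relative to $A$ and a real $Z\in\bigcap_n U^A_n$, I want to produce an (unrelativized) Martin-L\"of test capturing $Z$, so that $Z\notin\MR$. The idea is that the relativized Schnorr test $U^A$ has sections $U^A_n$ with $\mu U^A_n\le 2^{-n}$ and, crucially, with $\mu U^A_n$ a real \computablefrom\ $A$ uniformly in $n$. So for each $n$ there is a function $f\le_T A$ that, on input $n$, outputs (a code for) a clopen set $C_{n}$ with $[C_{n}]^\preceq\subseteq U^A_n$ and $\mu([C_{n}]^\preceq)\ge \mu U^A_n-2^{-n}$, say $\mu(U^A_n-[C_n]^\preceq)\le 2^{-n}$; this finite-approximation function is what we will trace.

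First I would fix the enumeration-of-clopen-sets formalism from the preamble ($S_e$, $[S_e]^\preceq$) and define, from the $A$-computable data of the Schnorr test, a single function $f\le_T A$ with $f(n)$ a strong index for a finite set of strings $S_{f(n)}$ such that $[S_{f(n)}]^\preceq \subseteq U^A_n$ and $\mu(U^A_n - [S_{f(n)}]^\preceq) \le 2^{-n}$. Next I would invoke \ce\ traceability of $A$: there is a \computable\ bound $p$ and, for this particular $f$, a \computable\ function $r$ with $|W_{r(n)}|\le p(n)$ and $f(n)\in W_{r(n)}$ for all $n$. By Proposition \ref{tracebound} I may assume $p$ is as convenient as I like — e.g. $p(n)=n$ or even $p(n) = 2^n$ — which will matter for the measure bookkeeping below. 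The point is that $W_{r(n)}$ is a \ce\ set of at most $p(n)$ candidate indices, one of which is the true value $f(n)$; I can enumerate it and, for each candidate index $i\in W_{r(n)}$, form the clopen set $[S_i]^\preceq$ and intersect it with (an enumeration of) $U^A_n$... but $U^A_n$ is not available unrelativized. The fix is to not use $U^A_n$ at all in the test: instead put, for each candidate $i\in W_{r(n)}$, the clopen set $[S_i]^\preceq$ itself into the test, but only those $i$ whose measure $\mu[S_i]^\preceq$ is not too large — this truncation is decidable since $S_i$ is a finite explicit set of strings.

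Concretely I would define $V_m = \bigcup_{n \ge 2m}\ \bigcup\{ [S_i]^\preceq : i\in W_{r(n)},\ \mu[S_i]^\preceq \le 2^{-n}+2^{-n} \}$, enumerated uniformly in $m$ as a $\Sigma^0_1$ class, and check that $\mu V_m$ is small: for each $n\ge 2m$ there are at most $p(n)$ candidates each contributing measure at most $2^{-n+1}$, so $\mu V_m \le \sum_{n\ge 2m} p(n)\,2^{-n+1}$, which, after choosing $p$ slow-growing via Proposition \ref{tracebound} (e.g. $p(n)\le 2^{n/2}$, or just replacing $n$ by a faster-growing argument throughout), is $\le 2^{-m}$ as required for a Martin-L\"of test. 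Then $Z\in\bigcap_n U^A_n$ forces $Z\in [S_{f(n)}]^\preceq$ for infinitely many... no — I need $Z\in U^A_n$ to give $Z$ inside the \emph{captured} part $[S_{f(n)}]^\preceq$ for cofinitely many $n$, which is not automatic since $[S_{f(n)}]^\preceq$ only approximates $U^A_n$ from inside. The clean way around this is the standard trick: replace the test $U^A$ by $\widetilde U^A_n = \bigcap_{k}U^A_{\langle n,k\rangle}$-style refinements, or more simply note $\bigcap_n U^A_n = \bigcap_n \bigcap_{j\ge n} U^A_j$ and use that the inner approximations $[S_{f(j)}]^\preceq$, $j\ge n$, cover all of $\bigcap_{j} U^A_j$ up to measure $\sum_{j\ge n}2^{-j} = 2^{-n+1}$; so $Z\in\bigcap_j U^A_j$ lies in $\bigcup_{j\ge n}[S_{f(j)}]^\preceq$ for every $n$, hence in $V_m$ for every $m$, hence $Z\notin\MR$.

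The main obstacle, and the step deserving the most care, is exactly this mismatch between the relativized test sections and their finite inside-approximations: a single finite clopen piece $[S_{f(n)}]^\preceq$ need not contain $Z$, so I must arrange the approximation and the intersection structure so that $Z$ is caught at \emph{infinitely many} levels with total measure summable — which is what the $\bigcup_{j\ge n}$ and the measure bound $\sum_{j\ge n}2^{-j}$ deliver. A secondary point requiring attention is ensuring $V_m$ is genuinely $\Sigma^0_1$ uniformly in $m$: $W_{r(n)}$ is \ce, $S_i$ is \computable\ from its strong index, and $\mu[S_i]^\preceq$ is a \computable\ rational, so the whole construction is a uniformly \ce\ union of clopen sets, as needed. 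Everything else is routine measure arithmetic, which I will not grind through here.
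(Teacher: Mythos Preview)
There is a genuine gap at exactly the step you flag as the main obstacle. From the measure bound
\[
\mu\Bigl(\textstyle\bigcap_j U^A_j \setminus \bigcup_{j\ge n}[S_{f(j)}]^\preceq\Bigr)\le \sum_{j\ge n}2^{-j}
\]
(indeed this set has measure zero, being contained in $\bigcap_{j\ge n}(U^A_j\setminus [S_{f(j)}]^\preceq)$) you conclude the \emph{pointwise} statement that $Z\in\bigcap_j U^A_j$ implies $Z\in\bigcup_{j\ge n}[S_{f(j)}]^\preceq$. That inference is invalid: a null set need not be empty. Concretely, nothing prevents an enumeration of $U^A_n$ in which the basic open neighborhood of $Z$ appears only after the clopen piece you select as $[S_{f(n)}]^\preceq$; this can happen for every $n$, so $Z$ may lie in $U^A_n\setminus [S_{f(n)}]^\preceq$ for all $n$ simultaneously. (Your approximation condition $\mu(U^A_n\setminus [S_{f(n)}]^\preceq)\le 2^{-n}$ is in fact vacuous, since already $\mu U^A_n\le 2^{-n}$; but even with a much tighter bound the problem persists.) Taking the union over $j\ge n$ does not help: that union still omits $Z$ if each individual approximation does.

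The paper's proof avoids this by tracing not one clopen approximation per level but the entire increasing sequence $U^A_{n,s}$, $s\in\omega$, with $\bigcup_s U^A_{n,s}=U^A_n$; since $U^A_n$ is open, every $Z\in U^A_n$ lies in some $U^A_{n,s}$, so the containment $\bigcap_n U^A_n\subseteq\bigcap_n V_n$ is automatic and pointwise, with no measure argument needed for it. The price is that one now has infinitely many traced values for each $n$, and a naive union of all traced candidates has infinite measure. The paper controls this with a nesting condition on the subtrace $\hat T_{\la n,s\ra}$ (each accepted candidate at stage $s$ must extend one at stage $s-1$), which makes the total measure telescoping and summable. This chaining idea is the missing ingredient; your single-approximation setup cannot be repaired by measure bookkeeping alone.
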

\unskip

\begin{proof}Assume that $A$ is \ce\ traceable and that $U^A$
is a Schnorr test relative to $A$. Let $U^A_{n,s}$, $n,s\in\omega$,
be clopen sets, $U^A_{n,s}\subseteq U^A_{n,s+1}$,
$U^A_n=\bigcup_{s\in\omega} U^A_{n,s}$, such that the $U^A_{n,s}$
are $\Delta^{0,A}_1$ classes uniformly in $n$ and $s$. As $\mu
U^A_n=2^{-n}$, we may assume that $\mu U^A_{n,s}> 2^{-n}(1-2^{-s})$.
Let $f$ be an $A$-computable function such that $[S_{f(\la
n,s\ra)}]^\preceq=U^A_{n,s}$. Since $A$ is \ce\ traceable and
$f\le_T A$, we can let $T$ be an \ce\ trace of $f$. By Proposition
\ref{tracebound}, we may choose $T$ such that in addition $|T_x|\le
x$ for each $x>0$.

We now want to define a subtrace $\hat T$ of $T$, i.e., $\hat
T_{\la n,s\ra}\subseteq T_{\la n,s\ra}$ for each $n,s$. The intent
is that the open sets defined via $\hat T$ are small enough to give
us a Martin-L\"of test containing $\inter_{n\in\omega} U^A_n$, and
nothing important is in $T_{\la n,s\ra}-\hat T_{\la n,s\ra}$. Thus let
$\hat T_{\la n,s\ra}$ be the set of $e\in T_{\la n,s\ra}$ such that
$2^{-n}(1-2^{-s})\le\mu [S_e]^\preceq\le 2^{-n}$ and
$[S_e]^\preceq\supseteq [S_d]^\preceq$ for some $d\in\hat T_{\la
n,s-1\ra}$, where $\hat T_{\la n,-1\ra}=\omega$. Let
$$V_n=\Union\left\{[S_e]^\preceq: e\in \hat T_{\la n,s\ra},\,\, s\in\omega\right\}.$$

\noindent Then $\mu V_n\le 2^{-n} |\hat T_{\la
n,0\ra}|+\sum_{s\in\omega} 2^{-s} 2^{-n} |\hat T_{\la n,s\ra}|$.
Since $|\hat T_{\la n,s\ra}|\le |T_{\la n,s\ra}|\le \la n,s\ra$ for
$\la n,s\ra\ne 0$, and $\la n,s\ra$ has only polynomial growth in
$n$ and $s$, it is clear that $\sum_{s\in\omega}2^{-s}2^{-n}|\hat
T_{\la n,s\ra}|$ is finite and goes effectively to $0$ as
$n\to\infty$; hence the same can be said of $\mu V_n$. Thus there
is a recursive function $f$ such that $\mu V_{f(n)}\le 2^{-n}$. Let
$\tilde V_n=V_{f(n)}$. Then $\tilde V$ is a Martin-L\"of test and
$\Inter_n U^A_n\subseteq \Inter_n \tilde V_n$. That is, each Schnorr
test relative to $A$ is contained in a Martin-L\"of test. It follows
that each real that is Martin-L\"of random is Schnorr random
relative to $A$, and the proof is complete.\qquad\end{proof}

Next we will show the ``$\RA$'' direction of Theorem
\ref{allonlow}(I). The proof is similar to the ``$\RA$'' of Theorem
\ref{TZbigtheorem}.

\begin{definition} For $k,l\in\omega$ define the clopen set
\[B_{k,l}=\bigcup\left\{[\tau1^k]:\tau\in 2^{<\omega}, |\tau|=l\right\},\]
where $1^k$ is a string of $1$'s of length $k$.
\end{definition}

Note that $\mu B_{k,l}=2^{-k}$ for all $l$.

\begin{lemma}
If $A\in 2^\omega$ is $\LLL(\MR,\SR)$, then $A$ is \ce\ traceable.
\end{lemma}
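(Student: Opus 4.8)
The plan is to adapt the ``$\Rightarrow$'' half of the Terwijn--Zambella characterisation (Theorem~\ref{TZbigtheorem}): there, an $A$-Schnorr test coding a given $f\le_T A$ is shown to be contained in an \emph{unrelativised} Schnorr test, whose sections have computable measure, and that measure is used to recover a computable trace; here the only available covering test will be a single level $R_b$ of a fixed universal Martin-L\"of test $R$, and the fact that $\mu R_b$ need not be computable is exactly what degrades the conclusion from computable to \ce\ traceability.

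Fix $f\le_T A$. By Proposition~\ref{tracebound} it is enough to exhibit, for one convenient computable bound $p$, a computable $r$ with $f(x)\in W_{r(x)}$ and $|W_{r(x)}|\le p(x)$ for all $x$. First I would build, uniformly from $f$, an $A$-Schnorr test $U^A$ together with a real $Z_f\le_T A$ with $Z_f\in\bigcap_n U^A_n$, designed so that $f$ is recoverable column by column from any member of $\bigcap_n U^A_n$. The sections $U^A_n$ are taken to be finite unions of clopen sets $B_{k,l}$ (plus a padding clopen set): the point of the $B_{k,l}$ is the remark that $\mu B_{k,l}=2^{-k}$ is independent of the position $l$, so the positions $l$ can carry the (a priori unbounded) values of $f$ while $\mu U^A_n$ is kept equal to $2^{-n}$ and the sections remain $\Delta^{0,A}_1$ uniformly in $n$; thus $U^A$ is genuinely a Schnorr test relative to $A$. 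Since $A\in\LLL(\MR,\SR)$, no member of $\bigcap_n U^A_n$ is Schnorr random relative to $A$, hence none is Martin-L\"of random, so $\bigcap_n U^A_n\subseteq\bigcap_b R_b$, and in particular $\bigcap_n U^A_n\subseteq R_b$ for every $b$.

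Next, choose $b$ so large that $\mu R_b<\epsilon$ for a suitable $\epsilon$, and apply Lemma~\ref{weakening} with $U_n=U^A_n$ and $V=R_b$: there are $\sigma$ and $n_0$ with $\mu_\sigma(U^A_{n_0}-R_b)=0$ and $\mu_\sigma R_b<\epsilon$, so inside $[\sigma]$ the level-$n_0$ part of the test is, up to a null set, swallowed by the \emph{enumerable} open set $R_b$ while occupying only a small fraction of $[\sigma]$. The trace is then defined by enumerating $R_b$: whenever a basic clopen set appears inside $[\sigma]$ that is consistent with the shape of $U^A_{n_0}$ on the column coding $f(x)$, its candidate value for $f(x)$ is put into $W_{r(x)}$. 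Since the relevant part of the construction lies in $\bigcap_n U^A_n\subseteq R_b$, the true value $f(x)$ is enumerated, so $f(x)\in W_{r(x)}$; and the bound comes from the measure inequality: the pieces of $R_b$ responsible for distinct candidates for a fixed $x$ are controlled (pairwise disjoint, or of measure bounded below by a computable quantity, using again the position-independence of $\mu B_{k,l}$), and their total measure is at most $\mu_\sigma R_b<\epsilon$, giving $|W_{r(x)}|\le p(x)$ for a computable $p$.

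The step I expect to be the main obstacle is the construction in the second paragraph, and in particular reconciling the two requirements on a Schnorr test with the a priori unboundedness of $f$: the coding of $f(x)$ cannot sit in a computably bounded block of positions (that would bound $f$ and trivialise matters), so it must be spread out and self-delimiting, yet $\mu U^A_n$ must remain computable and the decoding must still work \emph{inside} the cylinder $[\sigma]$ handed back by Lemma~\ref{weakening}, which $r$ does not know in advance. I expect to handle this by first establishing that every $g\le_T A$ is dominated by a $\Delta^0_2$ function whose natural approximation has a computably bounded number of mind-changes --- this follows from $\Omega\in\SR^A$, which is implied by $\LLL(\MR,\SR)$ --- so that the columns may be located at positions supplied by such an approximation, the finitely many revisions being absorbed into the enumeration defining $r$; an alternative is to make the coding genuinely self-similar, so that its restriction to every cylinder looks the same and the unknown $\sigma$ causes no loss. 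Coordinating the unknown $\sigma$ with the unbounded growth of $f$ is the crux of the whole proof.
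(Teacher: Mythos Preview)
Your overall architecture is right and matches the paper's: code $f$ into the $A$-Schnorr test $U^A_n=\bigcup_{k>n}B_{k,f(k)}$, use $\LLL(\MR,\SR)$ to get $\bigcap_n U^A_n\subseteq R_b$, apply Lemma~\ref{weakening} to obtain $\sigma,n_0$, and read off an \ce\ trace from the behaviour of $R_b$ inside $[\sigma]$. Two points, however, need correction.

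First, the obstacle you worry about in your last paragraph is imaginary, and your proposed $\Delta^0_2$-domination detour is both unnecessary and unlikely to close. The sets $B_{k,l}$ are \emph{already} self-similar in exactly the sense you need: for $l\ge|\sigma|$ one has $B_{k,l}|\sigma=B_{k,l-|\sigma|}$. Hence $U^A_{n_0}|\sigma=\bigcup_{k>n_0}B_{k,f(k)-|\sigma|}$ (assuming $f(k)\ge k$, which one may arrange by tracing $k\mapsto f(k)+k$ instead), and the unknown shift by $|\sigma|$ is absorbed simply by adding $|\sigma|$ back at the end. No padding, no self-delimiting scheme, and no appeal to $\Omega\in\SR^A$ is needed; your ``alternative'' is the actual argument.

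Second, and more seriously, your trace definition and your size bound are not correct as stated. Enumerating basic clopen sets of $R_b$ and checking ``consistency with the shape of $U^A_{n_0}$'' does not give a well-defined trace, and for fixed $k$ the sets $B_{k,l}$ are certainly not pairwise disjoint, nor do distinct candidate values contribute disjoint pieces of $R_b$. The right move is to set $\tilde V=R_b|\sigma$ and take $T_k=\{l:\mu(B_{k,l}\setminus\tilde V)<2^{-(l+3)}\}$; this is $\Sigma^0_1$ because $\tilde V$ is, and $f(k)-|\sigma|\in T_k$ for $k>n_0$ since $\mu(U^A_{n_0}|\sigma\setminus\tilde V)=0$. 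The bound on $|T_k|$ then comes not from disjointness but from \emph{independence}: the events $B_{k,l}$ for $l$'s spaced at least $k$ apart are mutually independent of measure $2^{-k}$, so if $|T_k|\ge k\cdot 2^k$ one can pick $2^k$ independent ones among them and compute that $\mu\bigcup_{l\in T_k}B_{k,l}\ge 1-(1-2^{-k})^{2^k}\ge 1/2$, forcing $\mu\tilde V\ge 1/2-\sum_{l}2^{-(l+3)}\ge 1/4$, a contradiction. This independence calculation is the missing idea; without it the size bound, and hence the whole argument, does not go through.

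(Minor point: there is no need to build an explicit $Z_f\in\bigcap_n U^A_n$; only the inclusion $\bigcap_n U^A_n\subseteq R_b$ is used.)
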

\unskip

\begin{proof} Note that  $A$ is Low(\MR, \SR) if{}f for
every Schnorr test $U^A$ relative to $A$, $\bigcap_{n \in \NN} U^A_n
\sub \bigcap_{b \in \NN} R_b$ (recall that $R$ is a universal
Martin-L\"of test).

Oversimplifying a bit, one can say that the proof below goes as
follows. We code a given $g\le_T A$ into a Schnorr test $U^g$
relative to $A$. Then, by hypothesis, $\bigcap_n
U^g_n\subseteq\bigcap_n R_n$; in fact we will use only the fact that
$\bigcap_n U^g_n\subseteq R_3$. We then define an \ce 
trace $T$; namely, $T_k$ is the set of $l$ such that $B_{k,l}-R_3$ has small
measure in some sense. Since $R_3$ has rather small measure,
$B_{k,l}-R_3$ will tend to have big measure, which means that there
will be only a few $l$ for which $B_{k,l}-R_3$ has small measure; in
other words, $T_k$ has small size. Moreover, we make sure $T$ is a
trace for $g$ so that $A$ is \ce\ traceable.

We now give the proof details. Suppose that we want to find a trace for a
given function $g\le_T A$. We define the test $U^g$ by stipulating
that
\[ U^g_n=\Union_{k>n}B_{k,g(k)}.\]
It is easy to see that $\mu U^g_n$ can be approximated \computably\
in $A$, so after taking a subsequence of $U^g_n$, $n\in\omega$, we
may assume that $U^g$ is a Schnorr test relative to $A$. Hence, by
assumption,  $\Inter U^g \sub \bigcap_{b \in \NN} R_b$. Thus  $V=R_3$
contains $\Inter U^g$ and $\mu V<\frac{1}{4}$. We may assume
throughout that $g(k)\ge k$ for every $k$ because from a trace for
$g(k)+k$ one can obtain  a trace for $g$ with the same bound. By
Lemma \ref{weakening}, there exist $\sigma$ and $n$ such that
$\mu_\sigma(U^g_n-V)=0$ and $\mu_\sigma V<1/4$. As $U^g_0\supseteq
U^g_1\supseteq\cdots$, we can choose $\sigma$ and $n$ with the
additional property $n\ge|\sigma|$. Hence for each $k>n$, we have
$g(k)\ge k>n\ge |\sigma|$ and hence $g(k)\ge |\sigma|$.

Let $\tilde V=V|\sigma$, let $\tilde g(k)=\max\{0,g(k)-|\sigma|\}$,
and take
\[ T_k=\left\{l:\mu(B_{k,l}-\tilde V)<2^{-(l+3)}\right\}.\]

\noindent Note that for each $l\in\omega$, if $l\ge |\sigma|$, then
$B_{k,l}|\sigma=B_{k,l-|\sigma|}$. Thus, since $g(k)\ge |\sigma|$,
$$U^g_n|\sigma=\Union_{k>n} B_{k,g(k)}|\sigma = \Union_{k>n} B_{k,g(k)-|\sigma|} = U^{\tilde g}_n,$$
and so $\mu(U^{\tilde g}_n-\tilde V)=\mu_\sigma(U^g_n-V)=0$.
Hence $\tilde g(k)\in T_k$ for  all $k>n$.

Since $\tilde V$ is a $\Sigma^0_1$ class, it is evident that $T$ is
an \re\ set of integers; indeed $B_{k,l}-\tilde V$ is a $\Pi^0_1$
class, and thus we can enumerate the fact that certain basic open sets
$[\sigma]$ are disjoint from it, until the measure remaining is as
small as required. A trace for $g$ is obtained as follows:
\begin{equation*} G_k=
\begin{cases}
   \{l+|\sigma|:l\in T_k\}  & \text{if } k>n,
\\ \{g(k)\}                 & \text{if } k\le n.
\end{cases}
\end{equation*}

We now show that $G$ is a trace for $g$; i.e., for all $k\in\omega$,
$g(k)\in G_k$. If $k\le n$, then this holds by definition of $G_k$;
thus suppose $k>n$. Then $g(k)>k>n>|\sigma|$, so $\tilde
g(k)=g(k)-|\sigma|$, so $g(k)=\tilde g(k)+|\sigma|$. As $k>n$,
$\tilde g(k)\in T_k$ and hence $g(k)\in G_k$.

Clearly $G$ is \re; thus it remains to show that $|G_k|$ is
\recursively\ bounded, independently of $g$. As $|G_k|=|T_k|$ for
$k>n$ and $|G_k|=1$ for $k\le n$, this is a consequence of Lemma
\ref{lalala} below.\qquad\end{proof}

\begin{lemma}\label{lalala}
If $\tilde V$ is a measurable set with $\mu\tilde V<\frac{1}{4}$, and $T_k=\{l:\mu(B_{k,l}-\tilde V)<2^{-(l+3)}\}$, 
then for $k\ge 1$, $|T_k|<2^kk$. 
\end{lemma}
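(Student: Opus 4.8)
The plan is to bound $|T_k|$ by a counting argument: each $l\in T_k$ contributes a large chunk of measure to $\tilde V$, but $\mu\tilde V$ is small, so there cannot be too many such $l$. Fix $k\ge 1$. Recall that $B_{k,l}$ is a clopen set of measure $2^{-k}$, built from the $2^l$ strings $\tau$ of length $l$ by taking $\tau 1^k$; so $B_{k,l}$ consists of $2^l$ basic clopen cylinders, each of measure $2^{-(k+l)}$. If $l\in T_k$, then $\mu(B_{k,l}-\tilde V)<2^{-(l+3)}$, so
\[
\mu(B_{k,l}\cap\tilde V)=\mu B_{k,l}-\mu(B_{k,l}-\tilde V)>2^{-k}-2^{-(l+3)}.
\]
The issue is that these sets $B_{k,l}\cap\tilde V$ for different $l$ need not be disjoint, so I cannot simply add the lower bounds. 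The key observation is that for distinct $l<l'$, the cylinders making up $B_{k,l}$ and $B_{k,l'}$ are in a tree-like inclusion pattern: a cylinder $[\tau 1^k]$ with $|\tau|=l$ and a cylinder $[\tau' 1^k]$ with $|\tau'|=l'$ are either disjoint or nested, and in fact $B_{k,l}$ and $B_{k,l'}$ can overlap substantially. So a direct disjointness argument fails, and this is the main obstacle.

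To get around the overlap, I would instead work within a single "top" cylinder at a time, or—cleaner—restrict attention to the measure that $B_{k,l}$ contributes \emph{strictly below its own branching level}. Here is the idea I expect to work. For each string $\tau$ of length $l$, the cylinder $[\tau 1^k]$ sits inside $[\tau]$, which has measure $2^{-l}$; the portion of $[\tau]$ lying in $B_{k,l}$ has measure $2^{-(k+l)}$, i.e., a $2^{-k}$ fraction of $[\tau]$. Now if $l\in T_k$ with $l\ge 1$, then by the bound on $\mu(B_{k,l}-\tilde V)$ together with the pigeonhole principle over the $2^l$ blocks $[\tau]$, there must be many $\tau$ for which $[\tau 1^k]\subseteq\tilde V$ up to a small error; more precisely, the average over the $2^l$ blocks of $\mu([\tau 1^k]-\tilde V)$ is less than $2^{-(l+3)}/2^l = 2^{-(2l+3)}$, which is a $2^{-(l+3)}$-fraction of $\mu[\tau 1^k]=2^{-(k+l)}$ when... — rather than chase this, the slicker route is: sum over $l$ with a geometric weighting. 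Since $\sum_{l\in T_k}2^{-(l+3)}\le\sum_{l\ge 0}2^{-(l+3)}=2^{-2}=1/4$, and $\tilde V$ has measure $<1/4$, we get that $\sum_{l\in T_k}\mu(B_{k,l}\cap\tilde V)$ would exceed... again overlap bites.

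So let me commit to the honest plan. I would prove: for any \emph{finite} $F\subseteq T_k$, one has $|F|<2^k k$; equivalently $|F|\le 2^k k-1$, say, or whatever crude bound falls out. Enumerate $F=\{l_1<l_2<\cdots<l_m\}$. Process the $l_i$ in \emph{decreasing} order. At stage $i$ (starting from the largest $l_m$), the cylinders of $B_{k,l_i}$ that are \emph{not} already contained in $\bigcup_{j>i}B_{k,l_j}$ form a set $B'_i$; the sets $B'_i$ are pairwise disjoint by construction, and $\mu(B'_i\cap\tilde V)\ge\mu B'_i-\mu(B_{k,l_i}-\tilde V)>\mu B'_i-2^{-(l_i+3)}$. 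The point is that each $B_{k,l_i}$, having measure $2^{-k}$ and being a union of cylinders finer than level $l_i<l_{i+1}$, can be covered by the coarser-level sets $B_{k,l_j}$ ($j>i$) only on a bounded fraction — in fact one can show $\mu B'_i\ge 2^{-k}\cdot 2^{-?}$ — hmm, this still needs the combinatorics of how $1^k$-blocks at different levels intersect. The cleanest correct statement: the cylinders $[\tau 1^k]$ over all $\tau$ of all lengths $l\in F$ form an antichain-modulo-refinement whose total measure, counted with the multiplicity cancellation above, is at most $\mu\tilde V+\sum_{l\in F}2^{-(l+3)}<1/4+1/4=1/2$; and on the other hand each level $l\in F$ with $l\ge k$ contributes, after removing refinements by higher levels, at least $2^{-k}(1-1/2)=2^{-(k+1)}$ fresh measure (levels $l<k$ number fewer than $k$, handle separately), giving $(m-k)2^{-(k+1)}<1/2$, hence $m<2^k+k\le 2^k k$ for $k\ge1$. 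Filling in the exact refinement-overlap constant is the one routine-but-fiddly computation I would actually have to do carefully; everything else is bookkeeping.
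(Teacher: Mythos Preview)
Your proposal correctly reaches the bound $\mu\bigl(\bigcup_{l\in T_k}B_{k,l}\bigr)<\tfrac12$, which is exactly the first half of the paper's proof. The gap is in the second half: your claim that ``each level $l\in F$ with $l\ge k$ contributes, after removing refinements by higher levels, at least $2^{-(k+1)}$ fresh measure'' is simply false, and no positive constant can replace $2^{-(k+1)}$. Take $k=1$, so that the events $B_{1,l}=\{X:X(l)=1\}$ are independent with measure $\tfrac12$. If $F=\{0,1,\ldots,m-1\}$ then $B'_1=B_{1,0}\setminus\bigcup_{j\ge 1}B_{1,j}$ has measure $(1/2)^m$, which is far below $2^{-(k+1)}=\tfrac14$ once $m\ge3$ and tends to~$0$. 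The point is that your disjoint pieces $B'_i$ always sum to exactly $\mu\bigcup_{l}B_{k,l}$, so bounding each $\mu B'_i$ from below by a constant is the same as claiming the union grows linearly in $|F|$ --- but the union is bounded by $1$. This is not a ``routine-but-fiddly constant''; it is the whole difficulty.

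What is actually needed is the observation you never use: the sets $B_{k,l}$ and $B_{k,l'}$ are \emph{independent} as soon as $|l-l'|\ge k$. The paper exploits this by thinning $T_k$ to a subset $T_k^*$ of size $\alpha=\lfloor|T_k|/k\rfloor$ whose elements are pairwise at distance $\ge k$. Independence then gives the exact value $\mu\bigl(\bigcup_{l\in T_k^*}B_{k,l}\bigr)=1-(1-2^{-k})^{\alpha}$, which is a lower bound on the full union and hence $<\tfrac12$. From $(1-2^{-k})^{\alpha}>\tfrac12$ one gets $\alpha<2^k-1$ by the elementary inequality $\bigl(\tfrac{m+1}{m}\bigr)^m\ge2$, and therefore $|T_k|<2^kk$. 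The thinning-to-independence step is the one genuine idea your proposal is missing.
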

\unskip

\begin{proof} Observe that, by definition of $T_k$,
$$\sum_{l\in T_k}\mu(B_{k,l}-\tilde V)<\sum_{l\in T_k}2^{-(l+3)}\le\frac{1}{8}\sum_{l\in\omega}2^{-l}=\frac{1}{4},$$
\noindent so
$$\mu\Union_{l\in T_k} B_{k,l}-\mu\tilde V\le\mu\Union_{l\in
T_k}(B_{k,l}-\tilde V)\le\frac{1}{4}.$$
\noindent As $\mu \tilde V<\frac{1}{4}$, we obtain that
$$\mu\Union_{l\in T_k} B_{k,l}<\frac{1}{2}.$$

As observed above, $\mu B_{k,l}=2^{-k}$. Moreover, for $k$ fixed, the
$B_{k,l}$'s are mutually independent as soon as the $l$'s are taken
sufficiently far apart. In fact, sufficiently far here means a
distance of $k$. So for $k\ge 1$ we let $T^*_k$ be a subset of
$T_k$ consisting of $\big\lfloor\frac{|T_k|}{k}\big\rfloor$
elements, all of which are sufficiently far apart. (Here $\lfloor
a\rfloor$ is the greatest integer $\le a$.) To show that such a set
exists we may assume we are in the worst case, where the elements of
$T_k$ are closest together: say, $T_k=\{0,\ldots,|T_k|-1\}$. Then let
$T^*_k=\{mk:0\le m\le \big\lfloor\frac{|T_k|}{k}\big\rfloor-1\}$.
As $\big(\big\lfloor\frac{|T_k|}{k}\big\rfloor-1\big)k\le
|T_k|-k\le |T_k|-1\in T_k$, this makes $T^*_k\subseteq T_k$. Write
$\alpha=\big\lfloor\frac{|T_k|}{k}\big\rfloor$. We now have
$$\mu\Inter_{l\in T_k} (2^\omega-B_{k,l})\le\mu\Inter_{l\in T^*_k}
(2^\omega-B_{k,l})=(1-2^{-k})^{\alpha}$$
\noindent and hence
$$1-(1-2^{-k})^{\alpha}\le 1-\mu \Inter_{l\in
T_k}(2^\omega-B_{k,l})=\mu 2^\omega-\mu \Inter_{l\in
T_k}(2^\omega-B_{k,l})$$
$$\le \mu\left(2^\omega-\Inter_{l\in
T_k}(2^\omega-B_{k,l})\right)=\mu\Union_{l\in T_k}
B_{k,l}<\frac{1}{2}.$$

\noindent From the inequality above we obtain, letting $m=2^k-1$,
$$\left(1-\frac{1}{m+1}\right)^{\alpha}=(1-2^{-k})^{\alpha}> \frac{1}{2}$$
or $\left(\frac{m+1}{m}\right)^{\alpha}<2$. Now suppose $\alpha\ge
m$. Then
$\left(\frac{m+1}{m}\right)^{\alpha}\ge\left(\frac{m+1}{m}\right)^m\ge
2$ as $(m+1)^m\ge m^m+m^{m-1}{m\choose 1}=2m^m$. Thus we conclude
$\alpha<m=2^k-1$. Now, by the definition of $\alpha$, we have
$\frac{T_k}{k}\le\alpha+1<2^k$ and so $|T_k|<2^k k$; this completes
the proof.\qquad
\end{proof}

In order to prove Theorem \ref{allonlow}(II), recall that, by
Theorem~\ref{TZbigtheorem}, each \computably\  traceable real is
$\LLL(\SR)$. Thus it suffices to show that each $\LLL(\CR, \SR)$
real is \computably\  traceable. The first ingredient for showing this
is the following result from \cite{Bedregal.Nies:03}.

\begin{lemma}\label{andre} 
 If $A$ is $\LLL(\CR,\SR)$, then $A$ is hyperimmune-free.
\end{lemma}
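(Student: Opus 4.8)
The plan is to prove the contrapositive: assuming $A$ is \emph{not} hyperimmune-free, I will exhibit a real $Z$ that is \computably\ random but not Schnorr random relative to $A$, thereby witnessing $A \notin \LLL(\CR,\SR)$. Since $A$ is not hyperimmune-free, fix $g \le_T A$ that is not dominated by any \computable\ function; replacing $g$ by a faster-growing $A$-computable function if necessary, I may assume $g$ is strictly increasing and that $g(n)$ dominates, infinitely often, the running time (under a fixed universal enumeration) of the $n$th partial \computable\ martingale on all inputs of length $\le n$. The key point is the standard one already flagged in the discussion preceding the lemma: for each \computable\ martingale $M_e$, there are infinitely many $n$ at which $g$ has ``caught up'' with the computation of $M_e$, so an $A$-computable process can use $g$ to compute (approximations to) $M_e$ on a block of strings and then play against it on that block.

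The construction of $Z$ proceeds in blocks. I reserve for each index $e$ a sparse infinite set of ``active stages'' where we diagonalize against $M_e$; on the block of bits associated to such a stage $n$, we use $g(n)$ as a clock, run $M_e$ on all relevant strings, and choose the next bits of $Z$ so as to follow the branch along which $M_e$ does not increase its capital — i.e.\ we build the $A$-computable martingale $L$ that at each active block bets the ``wrong way'' for $M_e$, so that $L$ wins capital exactly when some fixed \computable\ martingale would have. Simultaneously $L$ is designed to effectively succeed on $Z$ with the \computable\ lower bound $h(n) = n/4$: each active block contributes a definite multiplicative gain to $L$'s capital, and because the active stages can be spaced out \computably, the capital of $L$ along $Z \uhr n$ exceeds $n/4$ for infinitely many $n$. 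Hence $L$ is an $A$-computable martingale that effectively succeeds on $Z$, so $Z$ is not Schnorr random relative to $A$. On the other hand, no \computable\ martingale $M_e$ succeeds on $Z$: at each of the infinitely many active stages for $e$ where $g$ beats the clock, $Z$'s bits are chosen against $M_e$, so $M_e$'s capital is kept bounded along $Z$. Thus $Z \in \CR$.

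The main obstacle is exactly the one the authors single out parenthetically: \emph{partial} \computable\ martingales must be handled with care. If $M_e$ is only partial, we cannot wait indefinitely for its values, and an ill-judged commitment on a block can damage either the construction of $L$ or the verification that $Z \in \CR$. The remedy is to let $g$'s clock adjudicate: at an active stage $n$ for $e$, we run $M_e$ for $g(n)$ steps on the finitely many strings in play; if it converges we diagonalize as above, and if not we make a default (say, balanced) move on that block and move on — since $g$ is infinitely often above the true running time, infinitely many active stages for $e$ are ``successful,'' which suffices to bound $M_e$ on $Z$. The cost of this bookkeeping is that the decision of which case we are in at stage $n$, while $A$-computable, need not be $\Delta^0_2$ uniformly, which is why the resulting $Z$ is only $\Delta^0_3$ rather than lower in the hierarchy; but $\LLL(\CR,\SR)$ only asks that \emph{every} \computably\ random real be Schnorr random in $A$, so the complexity of the particular witness $Z$ is immaterial. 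Routine estimates (that $L$ is a genuine rational-valued martingale with $L(\lambda)\le 1$, that its gains accumulate past $n/4$ infinitely often, and that the blocks can be laid out \computably) complete the argument; I would not grind through them here.
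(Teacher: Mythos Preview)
Your proposal has the right overall shape (contrapositive, build $Z\in\CR$ on which an $A$-\computable\ martingale $L$ Schnorr-succeeds with bound $n/4$), but there is a genuine gap in the argument that $Z$ is \computably\ random.

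You propose to reserve for each index $e$ a sparse set of blocks and, on those blocks, choose the bits of $Z$ so that $M_e$ does not increase. But preventing $M_e$ from increasing on infinitely many blocks does \emph{not} keep $M_e$ bounded along $Z$: on the blocks devoted to other indices, $M_e$ may double its capital, and there is nothing in your scheme to control that. Diagonalizing against a single $M_e$ at a time on disjoint blocks simply does not yield a \computably\ random real. The paper resolves this by combining martingales: for each finite set $\alpha$ of indices it forms $M_\alpha = M_\beta + p_\alpha M_e$ (where $\alpha = \beta\cup\{e\}$ and $p_\alpha$ is a carefully chosen scaling factor), and then extends $x_\beta$ to $x_\alpha$ along a path where the \emph{combined} martingale $M_\alpha$ never increases. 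This guarantees that every component $M_e$ with $e\in\alpha$ stays below $2/p_\alpha$ on all of $X$, not just on the blocks devoted to $e$. That weighting-and-summing idea is the key missing ingredient.

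A second, related problem concerns your $L$. You describe $L$ as betting along $Z$ at the active blocks; but $Z$ itself depends on which $M_e$ are total (a $\Pi^0_2$ fact), so $Z$ is not $A$-\computable\ and $L$ cannot simply follow it. (Your remark that the stagewise decision is ``$A$-computable'' yet $Z$ is only $\Delta^0_3$ already signals a tension here.) The paper handles this by letting $L=\sum_\alpha L_\alpha$, where $L_\alpha$ bets along the second half of $x_\alpha$ with weight $2^{-n_\alpha}$; $A$ can compute each $x_\alpha$ (using $g$ to decide when convergence has occurred), so $L\le_T A$, and whichever $x_\alpha$'s happen to lie along $X$ contribute enough capital to give $L(x_\alpha)\ge \lfloor |x_\alpha|/4\rfloor$. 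Without an analogous hedging device your $L$ is not well defined.
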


{\it Proof.}  Suppose $A$ is not hyperimmune-free, so that
there is a function $g\leq_T A$ not  dominated by any \computable\
function. Thus, for each \computable\  $f$, $\exists^\infty x \ f(x)
\leq g(x)$. We will  define a \computably\  random real $X$ and an
$A$-\computable\  martingale $L$ that succeeds on $X$ in the sense
of Schnorr, so that $A$ is not Low(\CR, \SR). In the following, $\alpha,
\beta, \gamma$ denote finite subsets of $\NN$, and $n_{\alpha}=
\sum_{i \in \alpha} 2^i$ (here $n_\emptyset = 0$).

 Let $\{M_e\}_{e\in\omega}$ be an effective listing of all partial \recursive\  martingales
with range included in $[1/2, \infty)$. At stage $t$, we have a
finite portion $M_e[t]$, whose domain is a subset of some set of the
form $2^{\le n}$ for some $n$. If $X$ is not \computably\ random,
then $\lim_{n\to\infty}M_e(X\restrict n) =\infty$ for some total $M_e$
by \cite{Schnorr:71}. Let
$$TMG = \{e: M_e\ {\rm is\ total}\}.$$  
For finite sets $\alpha$, $\beta$, let us in this proof say that
$\alpha$ is a \emph{strong subset} of $\beta$ (denoted
$\alpha\subseteq^+\beta$) if $\alpha\subseteq\beta$ and moreover for
each $i\in\omega$, if $i\in\beta-\alpha$, then $i>\max(\alpha)$. Thus 
the possibility that $\beta$ contains an element smaller than some
element of $\alpha$ is ruled out.

For certain $\alpha$, and all  those included in $TMG$, we will
define strings $x_\alpha$ in such a way that $\alpha \subseteq^+
\beta \Rightarrow x_\alpha \preceq x_\beta$. We choose the strings
in such a way that $M_e(x_\alpha)$ is bounded by a fixed constant
(depending on $e$) for each total $M_e$ and each $\alpha$
containing $e$. Then the set of integers
$$X= \bigcup_{e\in\omega} x_{TMG\inter [0,e]}$$
\noindent is a \computably\  random real. On the other hand, we are
able to define an $A$-\computable\  martingale $L$ which Schnorr
succeeds on $X$. We give an inductive definition of the strings
$x_\alpha$, ``scaling factors'' $p_\alpha  \in \QQ^+$ (positive
rationals) (we do not define $p_\emptyset$), and partial \computable\
martingales $M_\alpha$ such that if $x_\alpha$ is defined, then
\begin{equation} \label{bb:steps}
M_\alpha(x_\alpha) \ \mbox{converges in} \ g(|x_\alpha|) \
\mbox{steps and} \ M_\alpha(x_\alpha) <2.
\end{equation}

\noindent It will be clear that $A$ can decide if $y=x_\alpha$, given
inputs $y$ and $\alpha$.

Let $x_\emptyset=\lambda$, and let $M_\emptyset$ be the constant
zero function. (We may assume that $g$ is such that $M_\emptyset(\lambda)$
converges in $g(0)$ steps.) Now suppose $\alpha = \beta \cup \{e\}$,
where $e
> \max(\beta)$, and inductively suppose that (\ref{bb:steps})
holds for $\beta$. Let
$$p_\alpha= {\displaystyle \frac{1}{2}} 2^{-|x_\beta|}(2-M_\beta(x_\beta)),$$
\noindent and let $ M_\alpha = M_\beta + p_\alpha M_e$. Since $M_e$
is a martingale on its domain, $M_e(z) \leq 2^{|z|}$ for any $z$. So,
writing $b=M_\beta(x_\beta)$, we have $M_\alpha(x_\beta)=b+p_\alpha
M_e(x_\beta)<b+p_\alpha 2^{|x_\beta|} = b+\frac{1}{2}(2-b) =
1+\frac{b}{2}< 1+\frac{2}{2}=2$ if $M_\alpha(x_\beta)$ is defined.

To define $x_\alpha$, we look for a sufficiently long $x\succeq
x_\beta$ such that $M_\alpha$ does not increase from $x_\beta$ to
$x$ and $M_\alpha(x)$ converges in $g(|x|)$ steps. In detail, for
larger and larger $m>|x_\beta|$, $m \geq 4n_\alpha$, if no string
$y$, $|y| <m$, has been designated to be $x_\alpha$ as yet, and if
$M_\alpha(z)$ (i.e., each $M_e(z), e \in \alpha$) converges in
$g(m)$ steps, for each string $z$ of length $\le m$, then choose
$x_\alpha$ of length $m$, $x_\beta \prec x_\alpha$ such that
$M_\alpha$ does not increase anywhere from $x_\beta$ to $x_\alpha$.

\begin{claim}
If $\alpha \subseteq TMG$, then $x_\alpha$
and $p_\alpha$ (the latter only if $\alpha\ne\emptyset$) are
defined.
\end{claim}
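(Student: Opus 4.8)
The plan is to prove the claim by induction on $|\alpha|$, following the recursive definition of the $x_\alpha$'s. The base case $\alpha=\emptyset$ is immediate, since $x_\emptyset=\lambda$ is defined outright and $p_\emptyset$ is not required. For the inductive step write $\alpha=\beta\cup\{e\}$ with $e=\max(\alpha)$ and $\beta=\alpha\setminus\{e\}$, so that $|\beta|=|\alpha|-1$ and $\beta\subseteq TMG$. By the induction hypothesis $x_\beta$ is defined, and since $x_\beta$ was produced by the same procedure, property (\ref{bb:steps}) holds for $\beta$; in particular $M_\beta(x_\beta)$ is a well-defined rational with $M_\beta(x_\beta)<2$.

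Granting this, $p_\alpha=\tfrac12 2^{-|x_\beta|}(2-M_\beta(x_\beta))$ is at once seen to be a positive rational (positive because $M_\beta(x_\beta)<2$), so it is defined. Next, since $e\in TMG$ the martingale $M_e$ is total, and $M_\beta$ is total as well (it is the constant zero function plus finitely many positive-rational multiples of total martingales $M_{e'}$, $e'\in\beta\subseteq TMG$), so $M_\alpha=M_\beta+p_\alpha M_e$ is a total, rational-valued martingale. Hence $M_\alpha(x_\beta)$ is defined, and the displayed computation in the text gives $M_\alpha(x_\beta)<2$.

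The heart of the argument — and the step I expect to be the main obstacle — is to show that the search defining $x_\alpha$ terminates, i.e., that there is some $m>|x_\beta|$ with $m\ge 4n_\alpha$ such that each $M_{e'}(z)$, $e'\in\alpha$, converges in $g(m)$ steps for every $z$ with $|z|\le m$. This is exactly where the hypothesis that $g$ is not dominated by any computable function is used. Since every $M_{e'}$, $e'\in\alpha\subseteq TMG$, is total and $\alpha$ is finite, the function
\[
f(m)=\max\{\ \text{number of steps for }M_{e'}(z)\text{ to halt}\ :\ e'\in\alpha,\ |z|\le m\ \}
\]
is total computable. As $g$ is not dominated by any computable function, $g(m)\ge f(m)$ for infinitely many $m$; intersecting this infinite set with the cofinite set $\{m: m>|x_\beta|,\ m\ge 4n_\alpha\}$ shows the search condition is met for infinitely many $m$, hence for a least one, so $x_\alpha$ (of that length $m$) is designated.

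Finally, for that $m$ the construction asks for $x_\alpha\succ x_\beta$ of length $m$ along which $M_\alpha$ does not increase; such a branch exists by repeatedly applying the martingale identity $M_\alpha(x0)+M_\alpha(x1)=2M_\alpha(x)$, which forces $\min(M_\alpha(x0),M_\alpha(x1))\le M_\alpha(x)$, and iterating from $x_\beta$. This also records that $M_\alpha(x_\alpha)\le M_\alpha(x_\beta)<2$ and that $M_\alpha(x_\alpha)$ converges in $g(|x_\alpha|)=g(m)$ steps, so (\ref{bb:steps}) holds for $\alpha$, completing the induction. The only genuinely delicate point is the interplay between totality (which makes $f$ computable) and non-domination of $g$ (which yields $g(m)\ge f(m)$ infinitely often); the rest is bookkeeping.
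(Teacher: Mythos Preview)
Your proof is correct and follows essentially the same approach as the paper: induction on $\alpha$, with the key step being that totality of each $M_{e'}$ for $e'\in\alpha\subseteq TMG$ makes the running-time function $f$ computable, so non-domination of $g$ yields some suitable $m$, and the martingale inequality supplies a nonincreasing branch. You spell out more details (positivity of $p_\alpha$, verification of~(\ref{bb:steps}) for $\alpha$), but the argument is the same.
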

\unskip

\begin{proof} The claim is trivial for $\alpha =
\emptyset$. Suppose that it holds for  $\beta$, and $\alpha = \beta \cup
\{e\}\subseteq TMG$, where $e
> \max(\beta)$. Since the function
$$f(m)= \mu s \quad \forall e  \in \alpha, \forall x\qquad [\, |x|\leq m
\Rightarrow M_e(x)\ {\rm converges\ in\ {\it s}\ steps}\,]$$

\noindent is \computable, there is a least $m\geq 4n_\alpha$, $m >
|x_\beta|$ such that $g(m) \geq f(m)$. Since there is a path down
the tree starting at $x_\beta$, where $M_\alpha$ does not increase,
the choice of $x_\alpha$ can be made.\qquad
\end{proof}

\begin{claim}\label{endextension}
If $\beta\subseteq^+\alpha$ are finite sets, then $M_\beta(x)\le
M_\alpha(x)$ for all $x$.
\end{claim}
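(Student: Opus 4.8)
The plan is to prove Claim \ref{endextension} by induction on the cardinality $|\alpha\setminus\beta|$, using the additive recursion defining the martingales $M_\gamma$. The base case $\alpha=\beta$ is immediate. The main structural fact I would isolate first is this: for any nonempty finite set $\delta$, if $e=\max(\delta)$ and $\delta'=\delta\setminus\{e\}$, then by the very definition of the $M$'s we have $M_\delta=M_{\delta'}+p_\delta M_e$ with $p_\delta\in\QQ^+$; since every partial martingale $M_e$ in our listing has range included in $[1/2,\infty)$, hence $M_e\ge 0$ wherever defined, it follows that $M_{\delta'}(x)\le M_\delta(x)$ for every $x$ in the domain of $M_\delta$ (which is contained in that of $M_{\delta'}$).

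For the inductive step, suppose $\beta\subsetneq^+\alpha$. I would let $e=\min(\alpha\setminus\beta)$ and introduce the intermediate set $\gamma=\beta\cup\{e\}$. Because $\beta\subseteq^+\alpha$, every element of $\alpha\setminus\beta$ — in particular $e$ — exceeds $\max(\beta)$, so $e=\max(\gamma)$ and $\gamma\setminus\{e\}=\beta$; the structural fact above then gives $M_\beta(x)\le M_\gamma(x)$ for all relevant $x$. Next I would check that $\gamma\subseteq^+\alpha$: plainly $\gamma\subseteq\alpha$, and any $i\in\alpha\setminus\gamma=(\alpha\setminus\beta)\setminus\{e\}$ satisfies $i>e=\max(\gamma)$ since $e$ is the least element of $\alpha\setminus\beta$. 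As $|\alpha\setminus\gamma|=|\alpha\setminus\beta|-1$, the inductive hypothesis applies and yields $M_\gamma(x)\le M_\alpha(x)$. Chaining the two inequalities gives $M_\beta(x)\le M_\alpha(x)$, completing the induction. (Equivalently, one can make the chain explicit: writing $\alpha\setminus\beta=\{e_1<\dots<e_k\}$ with $e_1>\max(\beta)$ and $\gamma_i=\beta\cup\{e_1,\dots,e_i\}$, each step $\gamma_{i-1}\to\gamma_i$ adds a new maximum, so $M_{\gamma_{i-1}}\le M_{\gamma_i}$ and $M_\beta=M_{\gamma_0}\le\cdots\le M_{\gamma_k}=M_\alpha$.)

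I do not expect any genuine obstacle here: the argument is pure bookkeeping with the relation $\subseteq^+$ together with nonnegativity of the $M_e$ and positivity of the scaling factors $p_\gamma$. The only point that requires a moment's attention is verifying that the intermediate set $\gamma$ again stands in the strong-subset relation $\gamma\subseteq^+\alpha$ (so that the induction is legitimate), and this is exactly why $e$ must be chosen as the \emph{minimum} of $\alpha\setminus\beta$ rather than an arbitrary element; with that choice it follows directly from the definition of $\subseteq^+$.
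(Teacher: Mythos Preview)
Your proof is correct and follows exactly the approach the paper takes: induction reducing to the case $\alpha=\beta\cup\{e\}$ with $e>\max(\beta)$, where the inequality is immediate from $M_\alpha=M_\beta+p_\alpha M_e$ with $p_\alpha>0$ and $M_e\ge 0$. The paper's own proof is a single sentence to this effect; you have simply spelled out the bookkeeping (in particular the verification that $\gamma\subseteq^+\alpha$) in more detail.
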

\unskip

\begin{proof}This is clear by induction from the case
$\alpha=\beta\cup\{e\}$, i.e., the case where $\alpha-\beta$ has
only one element.\qquad\end{proof}

\begin{claim}  $X$ is \computably\  random. \end{claim}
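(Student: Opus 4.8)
The plan is to reduce, via the result of Schnorr quoted above, to bounding each \emph{total} martingale in the list along $X$, and then to do this using the no-increase property built into the choice of the strings $x_\alpha$. Concretely, $X$ is computably random as soon as $\limsup_n M_e(X\uhr n)<\infty$ for every $e\in TMG$. Since each total partial recursive martingale with range in $[1/2,\infty)$ occurs as $M_e$ for infinitely many $e$, the set $TMG$ is infinite; list it in increasing order as $e_0<e_1<e_2<\cdots$ and put $\alpha_j=\{e_0,\dots,e_j\}=TMG\cap[0,e_j]$ (with $\alpha_{-1}=\emptyset$). By the preceding Claim each $x_{\alpha_j}$ and $p_{\alpha_j}$ is defined; since $\alpha_j\subseteq^+\alpha_{j+1}$ we have $x_{\alpha_j}\preceq x_{\alpha_{j+1}}$, and the construction gives $|x_{\alpha_{j+1}}|>|x_{\alpha_j}|$, so $X=\bigcup_j x_{\alpha_j}$ and each $x_{\alpha_j}$ is a proper initial segment of $X$.

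Next I would record two facts. (a) Every $M_\alpha$ is nonnegative, by induction from $M_\emptyset\equiv 0$ and $M_{\beta\cup\{e\}}=M_\beta+p_{\beta\cup\{e\}}M_e$ (here $p_{\beta\cup\{e\}}>0$ and $M_e\ge 1/2$). Hence, for $j\ge k$, Claim \ref{endextension} (applicable since $\alpha_k\subseteq^+\alpha_j$) together with $M_{\alpha_k}=M_{\alpha_{k-1}}+p_{\alpha_k}M_{e_k}$ and $M_{\alpha_{k-1}}\ge 0$ yields $M_{\alpha_j}(x)\ge M_{\alpha_k}(x)\ge p_{\alpha_k}M_{e_k}(x)$ for every string $x$; the essential point is that the coefficient $p_{\alpha_k}$ of $M_{e_k}$ is frozen once $e_k$ has been absorbed. (b) By construction $M_{\alpha_{j+1}}$ does not increase from $x_{\alpha_j}$ to $x_{\alpha_{j+1}}$, and moreover $M_{\alpha_{j+1}}(x_{\alpha_j})<2$ — this is the computation preceding the first Claim, using (\ref{bb:steps}) for $\alpha_j$ and $M_{e_{j+1}}(x_{\alpha_j})\le 2^{|x_{\alpha_j}|}$.

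To conclude, fix $k$ and let $n\ge |x_{\alpha_k}|$; choose $j\ge k$ with $|x_{\alpha_j}|\le n\le |x_{\alpha_{j+1}}|$. Then $X\uhr n$ lies on the path from $x_{\alpha_j}$ to $x_{\alpha_{j+1}}$, so by (b), $M_{\alpha_{j+1}}(X\uhr n)\le M_{\alpha_{j+1}}(x_{\alpha_j})<2$, and by (a) (with $j+1>k$), $p_{\alpha_k}M_{e_k}(X\uhr n)\le M_{\alpha_{j+1}}(X\uhr n)<2$. Thus $M_{e_k}(X\uhr n)<2/p_{\alpha_k}$ for all $n\ge|x_{\alpha_k}|$, so $\limsup_n M_{e_k}(X\uhr n)\le 2/p_{\alpha_k}<\infty$ and $M_{e_k}$ does not succeed on $X$. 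Since $k$ was arbitrary, $X$ is computably random.

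The step I expect to be the main obstacle is making the bound hold at \emph{all} lengths $n$, not merely at the checkpoints $|x_{\alpha_j}|$: the inequality $M_{e_k}(x_{\alpha_j})<2/p_{\alpha_k}$ is immediate, but interpolating requires combining the constancy of the coefficient $p_{\alpha_k}$ in every $M_{\alpha_j}$ with $j\ge k$ and the no-increase-between-checkpoints property (b). So the real content lies in verifying (b) is genuinely delivered by the construction, which is where the hyperimmunity of $g$ (used in the preceding Claim to find each $x_\alpha$ with $M_\alpha$ converging in $g(|x_\alpha|)$ steps without increasing) is used.
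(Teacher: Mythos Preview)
Your proof is correct and follows essentially the same route as the paper's: fix $e\in TMG$, set $\alpha=TMG\cap[0,e]$, and for any $x$ between consecutive checkpoints $x_{\gamma}\preceq x\preceq x_{\gamma'}$ with $\alpha\subseteq^+\gamma\subseteq^+\gamma'\subseteq TMG$ chain the inequalities $p_\alpha M_e(x)\le M_\alpha(x)\le M_{\gamma'}(x)\le M_{\gamma'}(x_\gamma)<2$ to conclude $M_e(x)<2/p_\alpha$. Your facts (a) and (b) are exactly the first and last two links of this chain, respectively, and your explicit interpolation argument is what the paper compresses into that single line.
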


\begin{proof} Suppose that $M_e$ is total. Let $\alpha= TMG
\cap [0,e]$. Suppose $\alpha \subseteq \gamma$, $\gamma'= \gamma
\cup \{i\}$, $\max(\gamma)<i$, and $\gamma'\subseteq TMG$. Then
$\alpha\subseteq^+\gamma\subseteq^+\gamma'$. Hence by Claim
\ref{endextension}, for each $x$ with $x_\gamma \preceq  x \preceq
x_{\gamma'}$, we have
$$p_\alpha M_e(x) \leq M_\alpha(x)\le M_{\gamma'}(x) \leq  M_{\gamma'}(x_\gamma) <2,$$
\noindent and hence $M_e(x)< 2/p_\alpha$ for each $x \prec X$, and so
the capital of $M_e$ on $X$ is bounded\mbox{.\qquad}\end{proof}

\begin{claim} There is a martingale $L \le_T A$ which  effectively succeeds on $X$. In fact,
\end{claim}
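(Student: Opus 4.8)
The plan is to build $L$ as a hedging martingale that ``follows the tree of the strings $x_\alpha$.'' Fix, harmlessly, the convention that the construction of each $x_\alpha$ uses the \emph{leftmost} path on which $M_\alpha$ does not increase. All of $L$'s capital, normalized to $1$ unit, starts at $\lambda=x_\emptyset$. Whenever a branch of $L$ carrying capital $v$ reaches a node of the form $x_\alpha$, split $v=\sum_{e>\max(\alpha)}2^{-(e-\max(\alpha))}v$ (with $\max(\emptyset)=-1$, so the weights sum to $1$) and let the resulting $(\alpha,e)$-sub-branch carry $2^{-(e-\max(\alpha))}v$. That sub-branch then bets all-or-nothing towards $x_{\alpha\cup\{e\}}$, i.e.\ along the leftmost $M_{\alpha\cup\{e\}}$-nonincreasing path issuing from $x_\alpha$ --- the very path the construction uses --- doubling its stake at every bit; upon reaching $x_{\alpha\cup\{e\}}$ (if it ever does) it carries $2^{-(e-\max(\alpha))}v\cdot 2^{|x_{\alpha\cup\{e\}}|-|x_\alpha|}$, and it recurses. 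Along $X$ the relevant ``true'' sub-branches are the ones towards $x_{\alpha_{j+1}}$, where $\alpha_j:=TMG\cap[0,e_{j-1}]$ and $e_0<e_1<\cdots$ enumerate the (infinite) set $TMG$; then $\lambda=x_{\alpha_0}\prec x_{\alpha_1}\prec\cdots$, each $x_{\alpha_{j+1}}$ is defined by the earlier Claim, $\alpha_{j+1}=\alpha_j\cup\{e_j\}$ with $e_j>\max(\alpha_j)$, and $X$ follows the $M_{\alpha_{j+1}}$-nonincreasing path on the block from $x_{\alpha_j}$ to $x_{\alpha_{j+1}}$, so the true sub-branch genuinely doubles on that block.

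Granting this, the bookkeeping is immediate: the length increments of the blocks $i<j$ sum to $|x_{\alpha_j}|$ and the hedge exponents $e_i-\max(\alpha_i)$ telescope to $e_{j-1}+1$, so
\[ L(X\uhr|x_{\alpha_j}|)\ \ge\ 2^{\,|x_{\alpha_j}|-(e_{j-1}+1)} . \]
Since the construction demands $|x_\alpha|\ge 4n_\alpha$, we have $|x_{\alpha_j}|\ge 4n_{\alpha_j}\ge 4\cdot 2^{e_{j-1}}$, so $e_{j-1}\le\log_2|x_{\alpha_j}|-2$ and $L(X\uhr|x_{\alpha_j}|)\ge 2\cdot 2^{|x_{\alpha_j}|}/|x_{\alpha_j}|$, far above $|x_{\alpha_j}|/4$ for all but finitely many $j$. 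As $TMG$ is infinite these lengths are infinitely many, so $\limsup_n\big(L(X\uhr n)-\lfloor n/4\rfloor\big)=\infty$: $L$ effectively succeeds on $X$ with the computable bound $h(n)=\lfloor n/4\rfloor$. Note the enormous slack: it would even suffice for the true sub-branches to genuinely double on merely about $2\log_2|x_{\alpha_j}|$ of the first $|x_{\alpha_j}|$ bits of $X$, betting $50$--$50$ on all the others.

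The hard part is not this arithmetic but turning $L$ into a genuine martingale that is honestly computable from $A$, given that $A$ does not compute $TMG$. The delicate point is that following the leftmost $M_{\alpha\cup\{e\}}$-nonincreasing path requires running the martingale $M_{\alpha\cup\{e\}}$: for a true candidate $M_{\alpha_{j+1}}$ this is total but may converge very slowly relative to one's position along the block, and for a ``false'' candidate $M_{\alpha\cup\{e\}}$ may be partial, so a naive definition of $L(z)$ would fail to halt. The cure is a step-bounded, position-indexed form of the hedge: a sub-branch's all-or-nothing move at a given bit is computed under a fixed, $A$-computable budget depending only on that bit's position, and it defaults to a $50$--$50$ bet once the budget is spent. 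This keeps $L$ an exact martingale computable from $A$, and --- and here is where one uses that $g$ lies infinitely often above the running time of every total martingale $M_\alpha$ --- it still leaves the true sub-branches with the (very sparse) set of genuine doubling-bits that the slack above requires. It then remains to check the routine points: that the weights sum to $1$ (so $L$ is a martingale, not merely a supermartingale), that the sum defining $L(z)$ is finite and effectively approximable even when infinitely many sub-branches are involved, and that the $50$--$50$ defaults cause no net loss along $X$.
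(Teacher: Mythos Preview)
Your outline has the right skeleton --- hedge over the tree of strings $x_\alpha$ and exploit $|x_\alpha|\ge 4n_\alpha$ for the success bound --- but the ``cure'' you sketch for the computability of $L$ does not work as written, and the gap is not a routine check. Your true sub-branch for $\alpha_{j+1}=\alpha_j\cup\{e_j\}$ tries to follow the $M_{\alpha_{j+1}}$-nonincreasing path \emph{in real time}, betting at each bit under an unspecified position-indexed budget. But recall how $|x_{\alpha_{j+1}}|$ was chosen: it is the \emph{least} $m>|x_{\alpha_j}|$ with $m\ge 4n_{\alpha_{j+1}}$ at which $M_{\alpha_{j+1}}(z)$ converges in $g(m)$ steps for every $|z|\le m$. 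Hence at every intermediate position $m$ along the block (once $m\ge 4n_{\alpha_{j+1}}$), the natural budget $g(m)$ is by construction \emph{insufficient} on some string of length $\le m$. So there is no reason a budget built from $g$ lets the true sub-branch place even one genuine doubling bet before it reaches $x_{\alpha_{j+1}}$; your ``enormous slack'' is arithmetically correct but useless unless you actually exhibit $\Theta(\log|x_{\alpha_j}|)$ positions where the budget suffices, and you have not. There is also a definitional problem: once a sub-branch defaults to $50$--$50$ at some bit, it is no longer at a single node, so ``continue along the leftmost nonincreasing path'' has no meaning at the next bit. As described, $L$ is not well-defined.

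The paper avoids all of this with one idea you are missing: \emph{do not bet in real time}. Set $L=\sum_\alpha L_\alpha$, where each $L_\alpha$ has initial capital $2^{-n_\alpha}$, stays constant until position $r(x_\alpha)=\lfloor|x_\alpha|/2\rfloor$, and only then doubles along the (now known) string $x_\alpha$ for its second half. The point is that whenever $|y|\ge r(x_\alpha)$ one has $|x_\alpha|\le 2|y|$, so $A$ can simply search all lengths $m\le 2|y|$ using $g$ to decide whether and where $x_\alpha$ was fixed --- no real-time path-following, no budgets, no defaults. The bound $L_\alpha(x_\alpha)=2^{r(x_\alpha)-n_\alpha}\ge 2^{\lfloor|x_\alpha|/4\rfloor}\ge\lfloor|x_\alpha|/4\rfloor$ then drops out immediately from $|x_\alpha|\ge 4n_\alpha$. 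Your nested hedge could be repaired with the same wait-then-bet device, but at that point the flat sum $\sum_\alpha L_\alpha$ is both simpler and already sufficient.
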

$$\exists^\infty x \prec X  \ L(x) \ge \  \left\lfloor \frac{|x|}{4} \right\rfloor.$$
\unskip

\begin{proof} For a string $z$, let $r(z)= \lfloor |z|/2
\rfloor$. We let $L = \sum_\alpha L_\alpha$, where $L_\alpha$ is a
martingale with initial capital $L_\alpha(\lambda) =2^{-n_\alpha}$,
which bets everything along $x_\alpha$ from $x_\alpha \uhr
r(x_\alpha)$ on. More precisely, if $x_\alpha$ is undefined, then
$L_\alpha$ is constant with value $2^{-n_\alpha}$. Otherwise, for
convenience we let $x=x_\alpha\uhr 2r(x_\alpha)$ and work with $x$
instead of $x_\alpha$; we define $L_\alpha$ on a string $y$ as
follows:

\begin{itemize}

\item If $y$ does not contain ``half of $x$,'' i.e., if $x\uhr
r(x)\not\preceq y$, then just let $L_\alpha(y)=2^{-n_\alpha}$.

\item If $y$ does contain ``half of $x$'' but $y$ and $x$ are
incompatible, then let $L_\alpha(y)=0$.

\item If $y$ contains ``half of $x$'' and $x$ and $y$ are
compatible, then let $L_\alpha(y)= 2^{-n_\alpha}2^{\min(|y|-r(x),
r(x))}$.

\end{itemize}

Thus if $y$ contains $x$, then $L_\alpha(y)=2^{r(x)-n_\alpha}$, so we
make no more bets once we extend $x_\alpha$, and if $x$ contains $y$,
then $L_\alpha(y)=2^{|y|-r(x)-n_\alpha}$; i.e., we double the capital
for each correct bit of $x$ beyond $x\uhr r(x)$.

Note that $L(\lambda)=\sum_\alpha 2^{-n_\alpha}$, and, as each
$k\in\omega$ has a unique binary expansion and hence is equal to
$n_\alpha$ for a unique finite set $\alpha$, we have
$L(\lambda)=\sum_{k\in\omega}2^{-k}=2$. Moreover, it is clear that
each $L_\alpha$ satisfies the martingale property
$L_\alpha(x0)+L_\alpha(x1)=2L_\alpha(x)$, and hence so does $L$.

$L$ effectively succeeds on $X$. Indeed, as $|x_\alpha|\ge
4n_\alpha$, we have $L_\alpha(x_\alpha)= 2^{r(x_\alpha)-n_\alpha}\ge
2^{\lfloor |x_\alpha|/2\rfloor-\lfloor |x_\alpha|/4|\rfloor}\ge
2^{\lfloor |x_\alpha|/4\rfloor}\ge \lfloor |x_\alpha|/4\rfloor$
since $2^q\ge q$ for each $q\in\omega$.

Finally, we show that $L\leq_T A$. Given input $y$,
we use $g$ to see if some string $x$, $|x| \leq 2|y|$, is $x_\alpha$.
If not, $L_\alpha(y) = 2^{-n_\alpha}$. Else we determine
$L_\alpha(y)$ from $x$ using the definition of $L_\alpha$.\qquad\end{proof}

The second ingredient to the proof of Theorem \ref{allonlow}(II) is
the following fact of independent interest.
\begin{proposition}\label{frank} If
$A$ is hyperimmune-free and \ce\ traceable, then $A$ is \computably\
traceable.
\end{proposition}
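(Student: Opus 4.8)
The plan is to show that for a hyperimmune-free and \ce\ traceable real $A$, every $f\le_T A$ admits a \computable\ trace with \computable\ bound. Fix $f\le_T A$. First I would apply \ce\ traceability to $f$ (or rather to a suitable auxiliary function, see below) to obtain a \computable\ function $r$ and a \computable\ bound $p$ with $f(x)\in W_{r(x)}$ and $|W_{r(x)}|\le p(x)$ for all $x$. The obstacle is that $W_{r(x)}$ is only \ce: we can enumerate its elements, but without knowing when the enumeration of the trace has produced $f(x)$ we cannot pass to a \emph{canonical} finite set. The key idea is that hyperimmune-freeness lets us bound, computably, the \emph{stage} at which $f(x)$ shows up in $W_{r(x)}$.

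Concretely, I would define a function $h\le_T A$ by letting $h(x)$ be the least stage $s$ such that $f(x)\in W_{r(x),s}$ (the $s$-step approximation to $W_{r(x)}$); this is well-defined since $f(x)\in W_{r(x)}$, and it is $A$-computable because $A$ computes $f$ and $r$ is \computable, so $A$ can search for the first stage at which the value $f(x)$ enters $W_{r(x)}$. Since $A$ is hyperimmune-free, there is a \computable\ function $q$ with $h(x)\le q(x)$ for all $x$. Now define the \computable\ finite set $D_{r'(x)} = W_{r(x),q(x)}$, i.e., enumerate $W_{r(x)}$ for $q(x)$ steps and take the resulting finite set as a canonical object; the index $r'(x)$ is obtained \computably\ from $r(x)$ and $q(x)$. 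Then $f(x)\in W_{r(x),h(x)}\subseteq W_{r(x),q(x)} = D_{r'(x)}$, and $|D_{r'(x)}|\le |W_{r(x)}|\le p(x)$. Thus $D_{r'}$ is a \computable\ trace for $f$ with the same \computable\ bound $p$.

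One subtlety: the \ce\ traceability hypothesis as stated gives a \emph{single} bound $p$ working for all $f\le_T A$ simultaneously, and the above construction preserves this uniformity — the bound $p$ is unchanged, only the trace $r$ is replaced by $r'$, which depends \computably\ on the original \ce\ trace together with the \computable\ dominating function $q$ for $h_f$. (If one is worried that the choice of $q$ introduces non-uniformity across different $f$, note that the definition of \computable\ traceability only requires, for each $f\le_T A$, the existence of some \computable\ $r$; the bound $p$ is the uniform witness and it is untouched.) By Proposition \ref{tracebound} one could even normalize $p$ afterward if desired, but it is not needed here. This completes the argument; the only real content is the passage from an \ce\ trace to a \computable\ one via a hyperimmune-free-provided time bound, and I expect the bookkeeping with approximation stages $W_{e,s}$ to be the only place demanding care.
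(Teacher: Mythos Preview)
Your proposal is correct and is essentially identical to the paper's proof: both define the $A$-computable function giving the least stage at which $f(x)$ appears in the \ce\ trace, dominate it by a \computable\ function using hyperimmune-freeness, and then truncate the enumeration at that bound to obtain a \computable\ trace with the same size bound. The only differences are in variable names and that the paper's write-up is terser.
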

\unskip

\begin{proof} Let $f\le_T A$, and let $h$ be as in the
definition of \ce\ traceability. Let $g(x)=\mu s(f(x)\in W_{h(x),s})$
(where $W_{e,s}$ is the approximation at stage $s$ to the \ce\ set
$W_e$). Then $g\le_T A$, and so since $A$ is hyperimmune-free, $g$ is
dominated by a \recursive\  function $r$. Thus if we replace
$W_{h(x)}$ by $W_{h(x),r(x)}$, we obtain a \recursive\  trace for
$f$.\qquad
\end{proof}

Lemma \ref{andre} and Proposition \ref{frank} together establish
Theorem \ref{allonlow}(II): if $A$ is $\LLL(\CR, \SR)$, then $A$ is
\ce\ traceable by Theorem \ref{allonlow}(I), and hyperimmune-free by
Lemma \ref{andre}. Thus by Proposition \ref{frank}, $A$ is
\computably\ traceable.

As a corollary, we obtain an answer to the question of Ambos-Spies
and Ku\v{c}era.

\begin{corollary} \label{main}
A real $A$ is $S$-low iff it is $S_0$-low.
\end{corollary}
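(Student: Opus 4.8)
The plan is to read the corollary off the two characterizations already established, together with the elementary implication recorded in the introduction. First I would note that being $S_0$-low (low for Schnorr tests) trivially entails being $S$-low (low for Schnorr randomness): if every Schnorr test $U^A$ relative to $A$ satisfies $\bigcap_n U^A_n\subseteq\bigcap_n V_n$ for some unrelativized Schnorr test $V$, then any real that passes all unrelativized Schnorr tests passes all $A$-relative ones as well, so $\SR\subseteq\SR^A$; combined with the general inclusion $\SR^A\subseteq\SR$ (noted in the introduction) this gives $\SR^A=\SR$, i.e.\ $A\in\LLL(\SR)$. No further work is needed for this direction.

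For the converse I would simply compose the two main equivalences. By definition $A$ is $S$-low precisely when $A\in\LLL(\SR)$. Theorem~\ref{allonlow}(II) gives that $A\in\LLL(\SR)$ if and only if $A$ is \computably\ traceable, and Theorem~\ref{TZbigtheorem} (Terwijn and Zambella) gives that $A$ is \computably\ traceable if and only if $A$ is low for Schnorr tests, that is, $S_0$-low. Chaining these, $A$ is $S$-low if and only if $A$ is \computably\ traceable if and only if $A$ is $S_0$-low, which is exactly the assertion of the corollary.

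Because no new argument is involved, there is no real obstacle here; the entire difficulty has been absorbed into the proof of Theorem~\ref{allonlow}(II) (and, upstream of it, into part (I), Lemma~\ref{andre}, and Proposition~\ref{frank}). The only thing worth verifying is purely definitional: that the class $\LLL(\SR)$ as defined above literally matches the notion of $S$-low, and that ``low for Schnorr tests'' in Theorem~\ref{TZbigtheorem} is the $S_0$-low of Ambos-Spies and Ku\v{c}era. With those identifications the corollary follows at once.
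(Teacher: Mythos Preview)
Your proposal is correct and follows essentially the same approach as the paper: both derive the corollary by chaining Theorem~\ref{allonlow}(II) with Theorem~\ref{TZbigtheorem}, using that each characterizes the respective lowness notion as \computably\ traceable. Your separate treatment of the easy direction $S_0\text{-low}\Rightarrow S\text{-low}$ is redundant (the two biconditionals already yield both directions), but harmless.
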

\unskip

\begin{proof} This follows by Theorem \ref{TZbigtheorem}
and Theorem \ref{allonlow}(II), since each \computably\  traceable
real is  $S_0$-low.\qquad \end{proof}

\section{Lowness notions related to Chaitin's halting probability}

Recall that $A$ is array \recursive\  if there is a function $f
\le_{wtt} \ES'$ bounding all functions \computablefrom\ $A$ on
almost all inputs.

\begin{theorem} If $\Omega \in$  \SR$^A$, then $A$ is array \recursive.  
\end{theorem}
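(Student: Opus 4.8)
The plan is to prove the contrapositive: if $A$ is not array recursive, then $\Omega\notin\SR^A$, i.e.\ $\Omega$ is not Schnorr random relative to $A$. The key preliminary observation is that the canonical approximation to $\Omega$ yields a function that is $\le_{wtt}\ES'$ and can therefore serve as a target for the failure of array recursiveness. Let $\Omega_s$ be the usual computable nondecreasing sequence of rationals with $\sup_s\Omega_s=\Omega$, and assume as usual $0<\Omega<1$. For each $n$ the integer $\lfloor 2^n\Omega_s\rfloor$ — which is exactly the information carried by $\Omega_s\restriction n$ — is nondecreasing in $s$ and bounded above by $2^n$, so it changes at most $2^n$ times; since $\Omega$ is irrational it converges to $\lfloor 2^n\Omega\rfloor$, whose binary digits form $\Omega\restriction n$. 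Hence the ``modulus'' $\beta(n):=$ the last stage $s$ with $\Omega_s\restriction n\ne\Omega_{s-1}\restriction n$ (and $\beta(n)=0$ if there is none) has a computable nondecreasing approximation $\beta(n,s)$ with at most $2^n$ mind-changes, and therefore $\beta\le_{wtt}\ES'$.

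Next I would invoke the hypothesis. Since $A$ is not array recursive and $\beta\le_{wtt}\ES'$, the function $\beta$ does not bound all functions computable from $A$ on almost all inputs; so there is $g\le_T A$ with $g(n)>\beta(n)$ for infinitely many $n$ — call such $n$ \emph{good}. For a good $n$ the stage $g(n)$ lies past the last stage at which $\Omega_s\restriction n$ moves, so $\Omega_{g(n)}\restriction n=\Omega\restriction n$. Define the clopen set $U_n=[\Omega_{g(n)}\restriction n]$: it has $\mu U_n=2^{-n}$, it is obtained uniformly $A$-computably in $n$ (using $g\le_T A$ and the computable sequence $\Omega_s$), and $\Omega\in U_n$ whenever $n$ is good.

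Finally I would assemble these clopen sets into a Schnorr test relative to $A$. Put $W_k=\bigcup_{n\ge k}U_n$. Then $(W_k)_{k\in\omega}$ is uniformly $\Sigma^{0,A}_1$, $\mu W_k\le\sum_{n\ge k}2^{-n}=2^{-(k-1)}$, and $\mu W_k$ is an $A$-computable real uniformly in $k$, because $\mu\bigl(\bigcup_{k\le n\le N}U_n\bigr)$ is computed exactly from $A$ and approximates $\mu W_k$ with error at most $2^{-N}$. Thus $(W_{k+1})_{k\in\omega}$ is a Schnorr test relative to $A$. Since there are infinitely many good $n$, for each $k$ some good $n\ge k+1$ exists, so $\Omega\in W_{k+1}$ for every $k$; hence $\Omega\in\bigcap_k W_{k+1}$ and $\Omega$ is not Schnorr random relative to $A$, contradicting $\Omega\in\SR^A$.

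The one delicate point is the first paragraph: one must check that the modulus $\beta$ of the canonical binary approximation to $\Omega$ really is $\le_{wtt}\ES'$, i.e.\ that each bit of $\Omega_s$ stabilizes after a computably bounded number of changes. Once this is established, the array-noncomputability hypothesis applies directly to $\beta$, and everything that follows is routine bookkeeping about measures, uniformity, and the re-indexing $k\mapsto k+1$ needed to get the measure bound $2^{-k}$ demanded of a Schnorr test.
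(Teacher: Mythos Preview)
Your proof is correct and follows the same overall strategy as the paper: both arguments hinge on the observation that the settling-time function $\beta$ of the initial segments of $\Omega$ is $\le_{wtt}\ES'$, so that failure of array recursiveness yields an $A$-computable function $g$ with $g(n)>\beta(n)$ infinitely often, giving infinitely many correct $A$-computable predictions of prefixes of $\Omega$.

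The only substantive difference is in how this prediction is turned into a witness that $\Omega\notin\SR^A$. The paper uses Schnorr's martingale characterization: it builds $M=\sum_p M_p$ where each $M_p$ has initial capital $2^{-p}$ and doubles along $\Omega_{g(p)}\uhr 3p$ from level $p$ onward, reaching capital $2^p$ whenever $p$ is good; this requires tripling the prefix length to $3p$ so that the $2p$ doublings both repay the initial capital and yield unbounded growth. You instead use the test characterization directly, setting $U_n=[\Omega_{g(n)}\uhr n]$ and $W_k=\bigcup_{n\ge k}U_n$, which avoids the $3x$ bookkeeping and makes the $A$-computability of $\mu W_k$ completely transparent. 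Your justification that $\beta\le_{wtt}\ES'$ via the mind-change characterization is also slightly more self-contained than the paper's one-line appeal to $\beta\le_{wtt}\Omega\le_{wtt}\ES'$. Both routes are standard and equally short; yours is marginally more elementary in that it never leaves the test definition of Schnorr randomness.
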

\unskip

\begin{proof} We show that the function $\beta(x)= \mu s \
\Omega_s \uhr 3x = \Omega \uhr 3x$ dominates each function  $\alpha
\le_T A$. Since $\beta \le_{wtt} \Omega\le_{wtt}0'$, this shows that
$A$ is array \recursive.

Given $\alpha \le_T A$, consider the $A$-\recursive\  martingale
$M=\sum_p M_p$, where $M_p$ is the martingale which has the value
$2^{-p} $ on all strings of length up to $p$ and then doubles the
capital along the string $y=\Omega_{\alpha(p)} \uhr 3p$, so that
$M_p(y) =2^p$. Note that $M(z) $ is rational for each $z$. If
$\alpha(p) > \beta(p)$ for infinitely many $p$, then $M$ Schnorr
succeeds on $\Omega$, a contradiction.\qquad \end{proof}

\begin{corollary}  If $A$ is \ce, then $\Omega \in$  \SR$^A$ if{}f $A$ is \ce\ traceable. 
\end{corollary}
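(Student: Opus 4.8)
The plan is to read off both directions from results already established, with Ishmukhametov's characterization of the c.e.\ array recursive sets serving as the one external input. For the ``$\Leftarrow$'' direction, suppose $A$ is \ce\ traceable. Then Theorem~\ref{allonlow}(I) gives $A\in\LLL(\MR,\SR)$, i.e.\ $\MR\subseteq\SR^A$. Since $\Omega$ is Martin-L\"of random, it follows at once that $\Omega\in\SR^A$. Note that this implication uses neither that $A$ is \ce\ nor the full array-recursiveness machinery.

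For the ``$\Rightarrow$'' direction, suppose $\Omega\in\SR^A$. By the theorem just proved, $A$ is array \recursive. Now invoke Ishmukhametov~\cite{Ishmukhametov:99}: a \ce\ set is array \recursive\ if and only if every function \computablefrom\ it has an \ce\ trace with a \computable\ size bound, that is, if and only if it is \ce\ traceable. Since $A$ is \ce\ by hypothesis, we conclude that $A$ is \ce\ traceable, completing the equivalence. (The converse half of Ishmukhametov's theorem, giving array recursiveness back from traceability, is not needed here, since we pass directly from \ce\ traceability to $\Omega\in\SR^A$ via Theorem~\ref{allonlow}(I); in particular one should not expect array recursiveness alone to suffice, as the example in Section~5 of a real that is array \recursive\ but not $\LLL(\Omega,\SR)$ shows.)

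The only point requiring a moment's care is matching Ishmukhametov's traceability notion — stated with some fixed size bound — to the ``\ce\ traceable'' of Section~\ref{forward}, which fixes a single \computable\ bound $p$; these agree by Proposition~\ref{tracebound}, which shows the choice of witnessing bound is immaterial. Beyond that bookkeeping there is no genuine obstacle: the corollary is simply the specialization to \ce\ oracles of the chain ``$\Omega\in\SR^A\Rightarrow A$ array \recursive\ $\Rightarrow A$ \ce\ traceable $\Rightarrow A\in\LLL(\MR,\SR)\Rightarrow\Omega\in\SR^A$,'' each link of which is already available.
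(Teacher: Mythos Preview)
Your proof is correct and matches the paper's approach exactly: the $\Rightarrow$ direction chains the preceding theorem ($\Omega\in\SR^A\Rightarrow A$ array \recursive) with Ishmukhametov's result that \ce\ array \recursive\ sets are \ce\ traceable, and the $\Leftarrow$ direction follows from Theorem~\ref{allonlow}(I) together with $\Omega\in\MR$. The paper's proof is terser---it states only the Ishmukhametov step---but your added remarks on Proposition~\ref{tracebound} and the non-\ce\ counterexample are accurate and do no harm.
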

\unskip

\begin{proof} For an \ce\ set $A$, array \recursive\  implies
\ce\ traceable by the work of Ishmukhametov \cite{Ishmukhametov:99}.\qquad
\end{proof}

In \cite{KMS} it is shown that \ce\ traceable degrees do not contain
diagonally non\recursive\  functions, and hence, by a result of
Ku\v{c}era \cite{Kucera:84}, the \ce\ traceable degrees have measure
zero. On the other hand, every real $A$ which is Martin-L\"of random
relative to $\Omega$ satisfies that $\Omega$ is \MR$^A$, by van
Lambalgen's theorem \cite{van.Lambalgen:90}, and hence the measure
of the set of $A$ such that $\Omega$ is \SR$^A$ is one; thus $A$ \ce\
traceable is not equivalent to $\Omega\in$\SR$^A$. Also,
$\Omega\in$\SR$^A$ is not equivalent to $A$ being array \recursive,
as we now show.

The following notion of forcing appears implicitly in
\cite{Downey.Jockusch.ea:96}.

\begin{definition}
A tree $T$ is a set of strings $\sigma\in 2^{<\omega}$ such that if
$\sigma\in T$ and $\tau$ is a substring of $\sigma$, then $\tau\in
T$. A tree $T$ is full on a set $F\subseteq\omega$ if whenever
$\sigma\in T$ and $|\sigma|\in F$, then $\sigma0\in T$ and
$\sigma1\in T$. Let $F_n$, $n\in\omega$, be finite sets such that
each $F_n$ is an interval of $\omega$, $|F_{n+1}|>|F_n|$, and
$\Union_n F_n=\omega$. The sequence $F_n$, $n\in\omega$, is called a
\emph{very strong array}. Let $P$ be the set of \recursive\  perfect
trees $T$ such that $T$ is full on $F_n$ for infinitely many $n$.
Order $P$ by $T_1\le_P T_2$ if $T_1\subseteq T_2$. The partial order
$(P,\le_P)$ is a notion of forcing that we call very strong array
forcing.
\end{definition}

\begin{theorem}
For each real $X$ there is a hyperimmune-free real $A$ such that no
real \computablefrom\ $X$ is in $\SR^A$. In particular, as
hyperimmune-free implies array \recursive, there is an array
\recursive\  real $A$ such that $\Omega\not\in\SR^A$.
\end{theorem}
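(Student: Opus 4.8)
The plan is to build $A$ by forcing with the partial order $(P,\le_P)$ of very strong array forcing just introduced, arguing that a sufficiently generic filter yields a real $A$ that is hyperimmune-free and escapes every potential Schnorr test that could be built from a real $\le_T X$.

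First I would set up the forcing so that $X$ is the ground‑model parameter and genericity is taken over a countable collection of dense sets that we describe below; since $P$ consists of recursive perfect trees, a generic filter determines a single real $A$ (the unique branch through all trees in the filter — here one uses that along the way we can always thin a recursive perfect tree to one of smaller "width", so the intersection is a single real). The hyperimmune‑freeness of $A$ is the standard payoff of this kind of forcing: given any Turing functional $\Phi$, the set of conditions $T$ forcing either that $\Phi^A$ is partial or that $\Phi^A$ is total and dominated by a recursive function is dense, because inside a recursive perfect tree $T$ on which $\Phi$ converges on a full level $F_n$ we can compute, recursively in (an index for) $T$, a bound on $\Phi^{\sigma}(m)$ over all $\sigma\in T$ of the relevant length; taking the tree‑indexed sup over the generic filter gives the recursive dominating function. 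This is exactly the argument of Downey–Jockusch–Stillwell alluded to in the reference.

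The heart of the proof is the diagonalization ensuring $Y\notin\SR^A$ for every $Y\le_T X$. Fix a Turing reduction $\Gamma$ with $Y=\Gamma^X$ (there are only countably many, one for each ground‑model functional applied to $X$, so we handle them one at a time in the genericity schedule). I want to meet, for this $Y$, the requirement: there is an $A$‑computable martingale that effectively succeeds on $Y$, equivalently (by the martingale characterization of Schnorr randomness in Section~\ref{forward}) $Y$ fails some Schnorr test relative to $A$. The key observation is that when a condition $T$ is full on an interval $F_n$ with $|F_n|$ large, and we have committed (via the generic) to the next block of bits of $A$ lying in $T$, we retain the freedom to choose those bits of $A$ so as to encode a guess at a long initial segment of $Y=\Gamma^X$: concretely, on the full level $F_n$ we read off the value of $\Gamma^X$ up to the use, which depends only on $X$ and the finitely many bits of $A$ already fixed plus the $|F_n|$ new bits, and we pack enough "doublings" of an $A$‑computable martingale $L$ into the block $F_n$ that $L(Y\uhr m)\ge m/4$ (say) at the corresponding length $m$ — this is possible precisely because the blocks $F_n$ grow, so the cumulative capital stays bounded while infinitely often exceeding the recursive lower bound $h(n)=n/4$. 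Since the generic is full on $F_n$ for infinitely many $n$, $L$ wins infinitely often, so $L\le_T A$ effectively succeeds on $Y$, whence $Y\notin\SR^A$. The density claim here is that, given any $T\in P$ and any $n$, we can find $T'\le_P T$ still full on cofinally many intervals but now carrying the required betting commitment on the interval where $T$ is full; this is where some care is needed.

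The main obstacle I anticipate is the interaction between the two demands on the generic: the hyperimmune‑free argument wants us to thin trees freely (reduce width, restrict levels), while the anti‑Schnorr argument wants us to keep trees full on infinitely many array intervals and wants control over how bits of $A$ on those intervals feed through $\Gamma$ to $Y$. One must verify that the "encode a martingale win" extension can always be performed \emph{inside} a recursive perfect subtree that is still full on infinitely many $F_n$ — i.e.\ that committing the bits on one interval $F_n$ to a specific pattern (the one making $L$ double) does not destroy membership in $P$. Since we only fix the bits on a single finite interval and leave all later intervals free, the resulting tree is still recursive, still perfect, and still full on cofinally many intervals, so it lies in $P$; the genericity bookkeeping then just interleaves the $\Gamma$‑requirements with the $\Phi$‑requirements. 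The last sentence of the theorem is then immediate: hyperimmune‑free implies array recursive (any $g\le_T A$ is dominated by a recursive, hence $\le_{wtt}\ES'$, function), and $\Omega\le_T X$ for, say, $X=\Omega$, so $\Omega\notin\SR^A$ gives an array recursive $A$ with $\Omega\notin\SR^A$.
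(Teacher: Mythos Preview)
Your overall strategy---take $A$ sufficiently generic for very strong array forcing, get hyperimmune-freeness from the usual tree argument, and use the full intervals $F_n$ to diagonalize against every $Y\le_T X$---is exactly the paper's approach. The hyperimmune-free part is fine (though the reference is Downey--Jockusch--Stob, not Stillwell).

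Where you diverge is in the diagonalization, and here you are making things harder than necessary and introducing some confusion. Your sentence ``we read off the value of $\Gamma^X$ up to the use, which depends only on $X$ and the finitely many bits of $A$ already fixed plus the $|F_n|$ new bits'' does not parse: $Y=\Gamma^X$ is computed from $X$ alone and has nothing to do with $A$. You seem to be envisioning an encoding of initial segments of $Y$ into $A$ together with a bespoke martingale that decodes and bets, and then worrying about how the martingale knows which interval carries which $Y$'s data.

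The paper sidesteps all of this. Given a condition $T$ and a real $B\le_T X$, pick any $n$ with $T$ full on $F_n$ and pass to the recursive perfect subtree of $T$ whose branches agree with $B$ on $F_n$; this subtree is still in $P$. Meeting these dense sets for every $B\le_T X$ forces $A\uhr F_n = B\uhr F_n$ for infinitely many $n$. Now a \emph{single} Schnorr test relative to $A$ works uniformly for all such $B$: let $U^A_n=\{C: (\exists k>n)\, C\uhr F_k = A\uhr F_k\}$. Since $|F_k|\to\infty$, $\mu U^A_n\le \sum_{k>n}2^{-|F_k|}\to 0$ effectively, so this is a Schnorr test in $A$, and every $B\le_T X$ lies in $\bigcap_n U^A_n$. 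No encoding, no per-$Y$ martingale bookkeeping, no worry about which interval belongs to which requirement. Your approach can be made to work, but it collapses to this once you notice that ``encode $Y$ into $A$ on $F_n$'' is most simply done by setting $A\uhr F_n = Y\uhr F_n$.
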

\unskip

\begin{proof} Let $A$ be sufficiently generic for very
strong array forcing. Then $A$ is hyperimmune-free, as may be proved
by modifying the standard construction of a hyperimmune-free degree
\cite{Martin.Miller:68} to work with trees that are full on
infinitely many $F_n$, $n\in\omega$.

Moreover, for each real $B$ \computablefrom\ $X$, there is an $n$
(hence infinitely many $n$) such that $A$ agrees with $B$ on $F_n$.
Indeed, given a condition $T$, a condition extending $T$ and ensuring
the existence of such an $n$ is obtained as a full subtree of $T$.

Hence no real $B$ \computablefrom\ $X$ is Schnorr random relative to
$A$. Indeed the measure of the set of those oracles $B$ that agree
with $A$ on infinitely many $F_n$ is zero, and it is easy to see
that the measure of those $B$ such that, for some $k>n$, $A$ and $B$
agree on $F_k$, goes to zero effectively as $n\to\infty$. Hence
there is a Schnorr test relative to $A$ which is failed by any such
$B$, as desired.\qquad\end{proof}

\begin{question} Characterize the (\ce)~sets of integers $A$ such
that $\Omega$ is computably random relative to $A$. Does this depend
on the version of $\Omega$ used?
\end{question}

\end{document}